\newtheorem{Theorem}{Theorem } [section]
\newtheorem{lemma}[Theorem]{Lemma}
\numberwithin{equation}{section}
\newcommand{\veee}{{\,\widecheck{\;}\,}}
\newcommand{\veeexi}{{\displaystyle\veee}^{{}_{{\scriptscriptstyle\xi}}}}
\newcommand{\R}{{\mathbb R}}
\font\ff=cmsy10
\def\tiF{\text{\ff F\kern 0pt}{\;}^{ -1}}
\def\tF{\text{\ff F\kern 0pt}}
\begin{document}
\title[]{On the unique continuation property of solutions\\of the  three-dimensional Zakharov-Kuznetsov  equation }
\author{Eddye Bustamante, Jos\'e Jim\'enez Urrea and Jorge Mej\'{\i}a}
\subjclass[2000]{35Q53, 37K05}

\keywords{Nonlinear dispersive equations, estimates of Carleman type}
\address{Eddye Bustamante M., Jos\'e Jim\'enez U., Jorge Mej\'{\i}a L. \newline
Escuela de Matem\'aticas\\Universidad Nacional de Colombia\newline
A. A. 3840 Medell\'{\i}n, Colombia}
\email{eabusta0@unal.edu.co, jmjimene@unal.edu.co, jemejia@unal.edu.co}

\begin{abstract}
We prove that if the difference of two sufficiently smooth solutions of the three-dimensional Zakharov-Kuznetsov equation
\begin{equation*}
\partial_{t}u+\partial_{x}\triangle u+u\partial_{x}u=0 \text{,}\quad (x,y,z)\in\mathbb R^3, \;t\in[0,1]\;,
\end{equation*}
decays as $e^{-a(x^2+y^2+z^2)^{3/4}}$ at two different times, for some $a>0$ large enough, then both solutions coincide.
\end{abstract}

\maketitle

\section{introduction}
In this paper we consider the  three-dimensional Zakharov-Kuznetsov (ZK) equation
\begin{equation}
\partial_{t}u+\partial_{x}\triangle u+u\partial_{x}u=0 \text{,}\quad (x,y,z)\in\mathbb R^3, \;t\in[0,1]\ ,\label{ZK}
\end{equation}
where $\triangle$ is the three-dimensional spatial Laplace operator.\\
Equation \eqref{ZK} is a three-dimensional generalization of the Korteweg-de Vries (KdV) equation, and was introduced by Zakharov and Kuznetsov in \cite{ZaKu} to describe unidirectional propagation of ionic-acoustic waves in magnetized plasma. The rigorous derivation of the Zakharov-Kuznetsov equation from the Euler-Poisson system with magnetic field was done by Lannes, Linares and Saut in \cite{LLS2013}.

The Cauchy problem associated to the equation \eqref{ZK}, in the context of classical Sobolev spaces, has been recently studied. In \cite{LS2009} Linares and Saut proved that the Cauchy problem is locally well-posed  for initial data in $H^s(\mathbb{R}^3)$ with $s>9/8$, by adapting to the ZK equation the techniques developed in \cite{KT2003} for the Benjamin-Ono equation.  In \cite{RV2012} Ribaud and Vento  obtained the local well-posedness for initial data in $H^s(\mathbb{R}^3)$ for $s>1$. The proof of this result is based on a sharp maximal function estimate in time-weighted spaces. In \cite{MP2014} Molinet and Pilod established the global well-posedness in $H^s(\mathbb{R}^3)$ with $s>1$. In order to obtain the global result the authors work in the framework of the functional spaces introduced by Koch and Tataru in \cite{KT2005}. Other results concerning the well-posedness of the modified ZK equation in three space dimensions can be found in \cite{G2014} and \cite{G2015}.

It is important to point out that the ZK equation in two-space variables has also been studied intensively in the last decades and that many of the results obtained for it have been tried to be generalized to the three-dimensional version. With respect to the Cauchy problem associated to the two-dimensional ZK equation we refer, among others, to the works of Faminskii \cite{F1995}, Linares and Pastor \cite{LP1}, Gr\" unrock and Herr \cite{GH2014}, and Molinet and Pilod \cite{MP2014}.

In addition to the Cauchy problem, another important aspect in the study of certain evolution equations is related to the unique continuation principles. Unique continuation principles consist in the determination of sufficient conditions, in both spatial and time variables, which guarantee that two solutions to the same equation are equal. For nonlinear dispersive equations, unique continuation principles have been obtained for several models including the KdV equation, the nonlinear Schr\"odinger equation and the Benjamin-Ono equation. See for example \cite{B}, \cite{BIM2011}, \cite{EKPV1}, \cite{FP2011}, \cite{KPV}, \cite{KPV2003}, \cite{KPV2006}, \cite{SS} and references therein.

Our aim in this work is to extend to the three-dimensional case the unique continuation principle obtained in \cite{BIM2013} for the two-dimensional ZK equation. In order to achieve this generalization we follow the scheme developed in \cite{BIM2013}, which uses the ideas of \cite{EKPV1} for the KdV equation. This principle is the main result of our work, and we state it as follows:

\begin{Theorem}\label{principal}

Suppose that  for some small $\epsilon>0$ \begin{equation}u_1, u_2\in C([0,1];H^4(\mathbb R^3)\cap L^2((1+x^2+y^2+z^2)^{8/5+\epsilon}\,dx\,dy\,dz))\cap  C^1([0,1];L^2(\mathbb R^3)),\label{hyp}\end{equation} 
are solutions of equation \eqref{ZK}.  Then there exists a universal constant $a_0>0$, such that if for some $a>a_0$
\begin{equation*}
u_1(0)-u_2(0),\, u_1(1)-u_2(1)\in L^2(e^{a(x^2+y^2+z^2)^{3/4}}\,dx\,dy\,dz), \label{condition}\end{equation*}
then
$u_1\equiv u_2$.
 \end{Theorem}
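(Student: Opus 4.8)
The plan is to set $w = u_1 - u_2$, which satisfies the linear equation $\partial_t w + \partial_x \triangle w = -\tfrac12\partial_x\big((u_1+u_2)w\big)$, so that $w$ solves a perturbed linear ZK equation of the form $\partial_t w + \partial_x \triangle w = V_1 w + V_2\cdot\nabla w$ with coefficients controlled by the $H^4$-norms of $u_1, u_2$. The decay hypothesis transfers to $w(0)$ and $w(1)$, both lying in $L^2(e^{a(x^2+y^2+z^2)^{3/4}}\,dx\,dy\,dz)$. The goal is to show that such exponential decay at two times, together with the equation, forces $w\equiv 0$; this is the standard dichotomy in the Escauriaza–Kenig–Ponce–Vega program, which I will follow as adapted to two dimensions in \cite{BIM2013}.

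The strategy splits into two parts. First, I would establish a \emph{lower bound}: if $w$ were not identically zero, a Carleman-type argument (or an appropriate energy/virial estimate together with a Treves-style identity) yields a lower bound on the $L^2$-mass of $w$ on annular regions $\{R \le |(x,y,z)| \le 2R\}$ for $t$ near an interior time, of the form $\|w(t)\|_{L^2(|\cdot|\sim R)} \gtrsim e^{-cR^{3/2}}$. This is the quantitative ``the solution cannot decay too fast unless it vanishes'' estimate. Second, I would establish an \emph{upper bound}: propagating the Gaussian-type decay $e^{a|\cdot|^{3/2}}$ from the two endpoints into the interior using the Appell-type transformation and a frequency-localized energy estimate adapted to the ZK dispersion $\xi^3 + \xi(\eta^2+\zeta^2)$, one shows $\|w(t)\|_{L^2(e^{\alpha|\cdot|^{3/2}})} \lesssim C$ uniformly on $[0,1]$ for some $\alpha$ comparable to $a$. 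Combining the two bounds, for $R$ large one gets $e^{-cR^{3/2}} \lesssim \|w(t)\|_{L^2(|\cdot|\sim R)} \lesssim e^{-\alpha R^{3/2}}$, and choosing $a$ (hence $\alpha$) larger than the universal threshold $a_0$ forces the contradiction $\alpha \le c$ to fail, so $w\equiv 0$.

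Making the two parts fit together cleanly requires the correct Carleman weights. The natural weights here are of the form $e^{\lambda \vp}$ with $\vp$ convex and homogeneous of degree $3/2$ in the spatial variable, matching the scaling of the third-order operator $\partial_x\triangle$; the exponent $3/2$ in the weight is precisely what produces the $3/4$ power in the stated decay rate $e^{-a(x^2+y^2+z^2)^{3/4}}$ after passing from the Carleman exponent to the physical decay. I would prove a Carleman inequality of the form
\begin{equation*}
\big\|e^{\lambda\vp}\,g\big\|_{L^2} \le C\,\big\|e^{\lambda\vp}\,(\partial_t + \partial_x\triangle)\,g\big\|_{L^2}
\end{equation*}
valid for $g$ with suitable support and for $\lambda$ large, with $C$ independent of $\lambda$; here the weighted spaces are the anisotropic mixed-norm spaces (such as $L^\infty_x L^2_{ty}$ and their relatives) reflecting the special role of the $x$-direction in ZK. Applying this with $g = \theta w$ for a cutoff $\theta$ localizing in space-time, and carefully absorbing the potential terms $V_1 w + V_2\cdot\nabla w$ using the smoothing gain in the estimate, produces the lower bound.

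The main obstacle I anticipate is the Carleman estimate itself in three spatial dimensions. Relative to the two-dimensional case of \cite{BIM2013}, the symbol $\xi(\xi^2+\eta^2+\zeta^2)$ has a larger singular set (the characteristic variety $\xi=0$ together with degeneracies where $\nabla$ of the symbol vanishes), so the pseudoconvexity computation controlling the commutator of the symmetric and antisymmetric parts of the conjugated operator $e^{\lambda\vp}(\partial_t+\partial_x\triangle)e^{-\lambda\vp}$ becomes more delicate, and one must check that positivity of the relevant quadratic form survives in the extra transverse variable. A second, more technical difficulty is quantifying exactly how much decay is gained and lost in the two bounds so that a single universal $a_0$ works; tracking the constants through the interpolation between the endpoint decay hypotheses and the interior estimate, and verifying that the polynomial weight $L^2((1+|\cdot|^2)^{8/5+\epsilon})$ in \eqref{hyp} is strong enough to justify all the integrations by parts and to close the energy estimates, is where the bulk of the careful bookkeeping will lie.
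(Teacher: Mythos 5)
Your overall skeleton --- a lower bound for the annular mass of $v=u_1-u_2$ of size $e^{-cR^{3/2}}$ versus an upper bound obtained from the decay at the two times, with a contradiction once $a$ exceeds a universal threshold --- is indeed the architecture of the paper. But two of the mechanisms you propose for the individual steps have genuine gaps. First, the upper bound: you want to propagate the $e^{a(x^2+y^2+z^2)^{3/4}}$ decay from $t=0,1$ into the interior by ``the Appell-type transformation and a frequency-localized energy estimate.'' There is no Appell (pseudo-conformal) transformation for the third-order operator $\partial_t+\partial_x\Delta$; that device is specific to Schr\"odinger-type equations, and no direct weighted-energy argument is known that carries such weights across times for ZK. The paper instead truncates $v$ outside the ball of radius $R$ and applies the two mixed-norm Carleman estimates of Theorem \ref{Carleman} with \emph{linear} weights $e^{\lambda|x|}e^{\beta|y|}e^{\beta|z|}$, whose right-hand sides contain the boundary terms $w(0)$, $w(1)$ --- this is exactly where the two-time hypothesis enters --- and then optimizes $\lambda=\tfrac1{10}aR^{1/2}$, $\beta=\lambda/C_2$, so that $\lambda|x|\sim aR^{3/2}$ on $|x|\sim R$; the $3/4$ power arises from this optimization, not from a Carleman weight homogeneous of degree $3/2$. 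Your displayed inequality $\|e^{\lambda\vp}g\|_{L^2}\le C\|e^{\lambda\vp}(\partial_t+\partial_x\Delta)g\|_{L^2}$ with $C$ independent of $\lambda$ and no endpoint terms cannot hold for functions that do not vanish at $t=0,1$, so as stated it cannot deliver the upper bound; an inequality of that type with vanishing endpoint data, but with the quadratic weight $\psi=\alpha[(x/R+\phi(t))^2+y^2/R^2+z^2/R^2]$ and $\alpha\gtrsim R^{3/2}$ (Lemma \ref{EstInf1}), is what produces the lower bound $\|v\|_{L^2(Q)}\le Ce^{14\overline C R^{3/2}}A_R(v)$ of Theorem \ref{Inferior}.

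Second, absorbing the first-order terms $u_1\partial_x w$ and $(\partial_x u_2)w$ cannot be done by the ``smoothing gain'' of the estimate alone: the Carleman constants are uniform in $R$ but not small, so one needs the coefficients themselves to be small on the support of the truncated function, i.e.\ on $|x|^2+|y|^2+|z|^2\ge R^2$. This is precisely where the hypothesis \eqref{hyp} is used quantitatively: the interpolation lemma of Nahas--Ponce (Lemma \ref{tres}) combined with the $L^2((1+x^2+y^2+z^2)^{8/5+\epsilon})$ bound gives the pointwise decay $|u_1|,\,|\partial_xu_2|\lesssim (1+x^2+y^2+z^2)^{-(1+\epsilon_1)/2}$, after which these terms are absorbed into the left-hand side for $R$ large. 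Your plan acknowledges ``bookkeeping'' with the polynomial weight but does not identify this mechanism, and without it the argument does not close. Finally, to control the truncation errors near the outer radius $N$ and let $N\to\infty$ one needs persistence of exponential decay of $v$ in $H^3$ (as in the argument borrowed from BIM2011), a step missing from your sketch; and the universal constant $a_0$ must come out of an explicit comparison of the lower-bound exponent $14\overline C(6C_2)^{3/2}R^{3/2}$ with the gain $\tfrac1{30}aR^{3/2}$, which requires tracking how $\lambda$, $\beta$, and $R$ are linked rather than treating the two bounds as independent black boxes.
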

With the same techniques used in  \cite{BJM2016} for the two-dimensional ZK equation,  it can be proved that the Cauchy problem associated to the equation \eqref{ZK} is globally well-posed in the weighted Sobolev space $Z_{4,r}:=H^4(\mathbb R^3)\cap
L^2((1+x^2+y^2+z^2)^{r}\,dx\,dy\,dz)$ for $r\leq 2$.  This shows that  hypothesis \eqref{hyp} in Theorem \ref{principal}   can be changed by this one: $u_1, u_2\in C([0,1];H^4(\mathbb R^3))\cap  C^1([0,1];L^2(\mathbb R^3))$ are solutions of equation \eqref{ZK} such that $u_1(0)$ and $u_2(0)$ belong to the class $L^2((1+x^2+y^2+z^2)^{8/5+\epsilon}\,dx\,dy\,dz)$ for some $0<\epsilon\leq2/5$.

The proof of Theorem \ref{principal} is based upon the comparison of three-dimensional spatial Carleman type estimates and  a lower estimate. To establish the Carleman estimates we used simple procedures which involve partial fraction decompositions and Fourier transform methods.

The Carleman type estimates are stated in the next theorem. 
\begin{Theorem}\label{Carleman}
Let $D:=\mathbb{R}^3\times [0,1]=\{(x,y,z,t)\mid (x,y,z)\in \mathbb{R}^3, t\in[0,1]\}$ and let $w\in C([0,1];H^4(\mathbb{R}^3)\cap C^1([0,1];L^2(\mathbb{R}^3)) $  such  that  for all $t\in[0,1]$ supp$\,w(t)\subseteq K$, where $K$ is a compact subset of $\mathbb R^3$.Then

(i) For $\lambda>0$ and $\beta>0$,
\begin{align}
\|e^{\lambda|x|}e^{\beta|y|}e^{\beta|z|}w\|_{L_t^{\infty}L_{xyz}^2(D)}\leq  &\|e^{\lambda|x|}e^{\beta|y|}e^{\beta|z|}w(0)\|_{L^2(\mathbb{R}^3)}+\|e^{\lambda|x|}e^{\beta|y|}e^{\beta|z|}w(1)\|_{L^2(\mathbb{R}^3)}\notag\\
\notag& +\|e^{\lambda|x|}e^{\beta|y|}e^{\beta|z|}(\partial_t+\partial_x\Delta)w\|_{L_t^1L_{xyz}^2(D)}\;;
\end{align}
(ii) There exist $C_1>0$ and $C_2>1$, independent of the set $K$, such that for $\beta\geq1$ and  $\lambda\geq C_2\beta$ 

\begin{align}
\| e^{\lambda|x|}e^{\beta|y|}e^{\beta|z|}Lw\|_{L_x^{\infty}L_{tyz}^2(D)}\leq  
& C_1\lambda^5
\sum_{0\leq k+l+n\leq3}\|e^{\lambda|x|}e^{\beta|y|}e^{\beta|z|}(|\partial_x^k\partial_y^l\partial_z^nw(0)|+|\partial_x^k\partial_y^l\partial_z^nw(1)|)\|_{L^2(\mathbb R^3)}
\notag\\
\notag& +C_1\|e^{\lambda|x|}e^{\beta|y|}e^{\beta|z|}(\partial_t+\partial_x\Delta)w\|_{L_x^1L_{tyz}^2(D)}\;;
\end{align}

where $L$ denotes any operator in the set $\{\partial_x,\partial_y,\partial_z,\partial_x^2,\partial_{xy},\partial_{xz},\partial_y^2,\partial_{yz},\partial_z^2\}$.
\end{Theorem}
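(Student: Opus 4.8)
The plan is to prove both Carleman estimates by conjugating the linear ZK operator $\partial_t+\partial_x\Delta$ with the exponential weight and then analyzing the resulting conjugated operator via Fourier methods, exactly as the excerpt advertises. Writing $f:=e^{\lambda|x|}e^{\beta|y|}e^{\beta|z|}w$ and $g:=e^{\lambda|x|}e^{\beta|y|}e^{\beta|z|}(\partial_t+\partial_x\Delta)w$, the first task is to compute the conjugated operator $\mathcal{L}_{\lambda,\beta}:=e^{\lambda|x|}e^{\beta|y|}e^{\beta|z|}(\partial_t+\partial_x\Delta)e^{-\lambda|x|}e^{-\beta|y|}e^{-\beta|z|}$ acting on $f$, so that $\mathcal{L}_{\lambda,\beta}f=g$. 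Expanding the derivatives produces $\partial_t f$ plus $\partial_x\Delta f$ together with lower-order terms carrying coefficients that are polynomials in $\lambda$ and $\beta$ (with $\operatorname{sgn}$ factors coming from differentiating $|x|,|y|,|z|$, and Dirac-type contributions at the axes that I must argue away using that $w$ is compactly supported and smooth enough that the weight stays in the relevant spaces). The compact support in $x,y,z$ and the regularity class $C([0,1];H^4)\cap C^1([0,1];L^2)$ guarantee all these manipulations are legitimate and that $f(0),f(1)$ make sense in $L^2$.

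For part (i), the strategy is an energy estimate in $t$ for the conjugated equation. I would split $\mathcal{L}_{\lambda,\beta}$ into its skew-symmetric (dispersive) part and a symmetric remainder, take the $L^2_{xyz}$ inner product of $\mathcal{L}_{\lambda,\beta}f=g$ with $f$, and integrate by parts in the spatial variables. The point is that the antisymmetric third-order part $\partial_x\Delta$ contributes nothing to $\frac{d}{dt}\|f(t)\|_{L^2_{xyz}}^2$, while the weight-induced terms either cancel or can be controlled. This yields a differential inequality of the form $\frac{d}{dt}\|f(t)\|_{L^2}^2\le C\|f(t)\|_{L^2}\|g(t)\|_{L^2}+(\text{lower order})$, and integrating in $t\in[0,1]$ and taking the supremum produces the $L^\infty_t L^2_{xyz}$ bound by the data at $t=0,1$ plus $\|g\|_{L^1_t L^2_{xyz}}$, which is precisely (i). Here no largeness of $\lambda,\beta$ is needed, consistent with the statement allowing arbitrary $\lambda,\beta>0$.

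For part (ii), the estimate is in $L^\infty_x L^2_{tyz}$ rather than $L^\infty_t L^2_{xyz}$, so the natural move is to treat $x$ as the evolution variable and solve an ODE in $x$ after taking the Fourier transform in the remaining variables $(t,y,z)\mapsto(\tau,\eta,\xi)$. Applying $\tF$ in $(t,y,z)$ turns $\mathcal{L}_{\lambda,\beta}f=g$ into a family, parametrized by $(\tau,\eta,\xi)$, of third-order ODEs in $x$ for $\hat f$ whose characteristic polynomial (a cubic in the dual variable to $x$) can be factored. The plan is to carry out a partial fraction decomposition of the resolvent $1/p(\zeta)$, where $p$ is this cubic, expressing the solution operator as a sum of one-dimensional convolution/integral operators in $x$ whose kernels are controlled by the distance of the roots of $p$ from the real axis. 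The crucial analytic input is a lower bound, uniform over $(\tau,\eta,\xi)$, on the imaginary parts (or pairwise separation) of these roots, and this is exactly where the hypothesis $\beta\ge1$ and $\lambda\ge C_2\beta$ enters: choosing $\lambda$ large relative to $\beta$ forces the cubic to have well-separated roots bounded away from the real axis, so that each kernel is integrable with norm controlled by a power of $\lambda$. Assembling the pieces via Minkowski's and Young's inequalities in $x$ then gives the $L^\infty_x L^2_{tyz}$ bound, with the factor $\lambda^5$ and the sum over derivatives $\partial_x^k\partial_y^l\partial_z^n$ of the data at $t=0,1$ arising from the boundary terms generated when inverting the $x$-ODE.

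The main obstacle I anticipate is part (ii): controlling the roots of the characteristic cubic uniformly in the three Fourier parameters $(\tau,\eta,\xi)$ and extracting clean, $\lambda$-quantified kernel bounds from the partial fraction decomposition. The genuinely three-dimensional feature — two transverse variables $y,z$ each weighted by $\beta$ — means the cubic depends on $\eta$ and $\xi$ symmetrically, and I expect the case analysis (small versus large frequencies, near-degenerate roots) to be the technically delicate part, together with tracking the precise power of $\lambda$ and the sign conditions on $|x|$ needed to keep the weighted boundary data finite. The energy step in (i), by contrast, should be comparatively routine once the conjugated operator is correctly computed.
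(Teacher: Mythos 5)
The main gap is in part (i). Your energy argument rests on the differential inequality $\frac{d}{dt}\|f(t)\|_{L^2}^2\le C\|f\|_{L^2}\|g\|_{L^2}+(\text{lower order})$, but this is false for the conjugated operator: taking the smooth model weight $e^{\lambda x+\beta y+\beta z}$, the symmetric part of $e^{\phi}(\partial_t+\partial_x\Delta)e^{-\phi}$ has, on the Fourier side, the quadratic form $3\lambda\xi^2+\lambda(\eta^2+\zeta^2)+2\beta\xi(\eta+\zeta)-\lambda^3-2\lambda\beta^2$, which is second order, unbounded in frequency and sign-indefinite (for $\beta$ large relative to $\lambda$, allowed in (i), it is unbounded in both directions), so neither a forward nor a backward Gr\"onwall inequality holds; and even when it is one-sided, Gr\"onwall would only yield a constant of size $e^{c(\lambda^3+\lambda\beta^2)}$, not the constant $1$ of the statement, which matters in Section 4 where $\lambda^3\sim R^{3/2}$ is of the same order as the exponents being compared. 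The simultaneous appearance of $w(0)$ and $w(1)$ is the symptom of the mechanism you are missing: one must split frequencies according to the sign of the real part of the conjugated symbol, evolving forward from $t=0$ on one piece and backward from $t=1$ on the other; the paper encodes exactly this in Lemma \ref{uno} through the space-time multiplier $m_0$, which gives the $L^\infty_tL^2_{xyz}$ bound with constant $1$ for all $\lambda,\beta>0$.

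For part (ii), your cubic-in-the-$x$-dual-variable and partial-fraction plan is essentially the paper's Lemma \ref{dos} (there the decomposition is performed in $v=\xi+i\lambda$ after the full space-time Fourier transform, the near-degenerate roots are confined to a compact frequency set $K_\beta$ via the function $T_\beta=w^3/\tau^2+27/4$, and on $K_\beta$ one does not use partial fractions at all but the direct bound $|P(v)|\gtrsim|\xi+i\lambda|^3$ valid for $\lambda\ge C_2\beta$), so that core is sound. However, your attribution of the factor $\lambda^5$ and the sums of up to three derivatives of $w(0),w(1)$ to ``boundary terms generated when inverting the $x$-ODE'' is misplaced: the $x$-problem is posed on all of $\mathbb{R}$ with compactly supported data and produces no such terms; the $t=0,1$ contributions arise when reducing the slab $\mathbb{R}^3\times[0,1]$ to $\mathbb{R}^4$ so that the multiplier lemmas (stated for $h\in L^1(\mathbb{R}^4)$) apply — a time truncation/extension of $w$ whose commutator with $\partial_t+\partial_x\Delta$ generates the endpoint data with derivatives and the $\lambda^5$ — and this reduction, which is the content of the paper's passage from Lemmas \ref{uno} and \ref{dos} to Theorem \ref{Carleman}, is absent from your outline. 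Finally, rather than conjugating directly with the non-smooth weight $e^{\lambda|x|}$ and arguing away Dirac terms, the paper works with the eight smooth weights $e^{\pm\lambda x}e^{\pm\beta y}e^{\pm\beta z}$ (Remark 1) and combines the resulting estimates using the compact support of $w$; you should do the same.
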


In order to bound the $H^2$-norm of the difference $v=u_1-u_2$ in the region
$$\{(x,y,z): R-1\leq \sqrt{x^2+y^2+z^2}\leq R\}\times[0,1],$$ with an exponential of the form $e^{-cR^{3/2}}$, we use a lower estimate, which will be established in the following theorem:

\begin{Theorem}\label{Inferior}
Let $u_1, u_2\in C([0,1];H^3(\mathbb{R}^3))\cap C^1([0,1];L^2(\mathbb{R}^3))$ be solutions of \eqref{ZK}. Define $v:=u_1-u_2$. Let $\delta>0$, $r\in(0,\frac12)$, $Q:=\{(x,y,z,t)\mid \sqrt{x^2+y^2+z^2}\leq 1, t\in[r,1-r]\}$ and suppose that $\|v\|_{L^2(Q)}\geq\delta$. For $R>0$ let us define
\begin{align*}
A_R(v):=\big(\int_0^1\int\limits_{R-1\leq\sqrt{x^2+y^2+z^2}\leq R}\sum_{0\leq k+l+n\leq 2}|\partial_x^k\partial_y^l\partial_z^nv(t)(x,y,z)|^2dx\,dy\,dz\,dt\big)^{\frac12}\,.
\end{align*}
Then there are constants $\overline C=\overline C(r)>0$, $C>0$ and $R_0\geq 2$ such that
\begin{align*}
\|v\|_{L^2(Q)}\leq Ce^{14\overline CR^{\frac32}}A_R(v)\quad\forall R\geq R_0\,.
\end{align*}
\end{Theorem}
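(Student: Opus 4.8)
The plan is to follow the Escauriaza--Kenig--Ponce--Vega strategy of \cite{EKPV1}, adapted to the two--dimensional ZK equation in \cite{BIM2013}, recasting the lower bound as a quantitative unique continuation estimate produced by a Carleman inequality with a convex weight. First I would record the equation satisfied by $v=u_1-u_2$. Subtracting the two instances of \eqref{ZK} and writing $u_1\partial_x u_1-u_2\partial_x u_2=u_1\partial_x v+(\partial_x u_2)\,v$ gives
\[(\partial_t+\partial_x\Delta)v=-u_1\partial_x v-(\partial_x u_2)\,v=:\mathcal F.\]
Since $H^3(\mathbb R^3)\hookrightarrow W^{1,\infty}(\mathbb R^3)$ and $u_1,u_2\in C([0,1];H^3(\mathbb R^3))$, both $u_1$ and $\partial_x u_2$ are bounded on $D$, so there is $c_0>0$ with the pointwise bound $|\mathcal F|\le c_0(|v|+|\partial_x v|)$. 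Thus $v$ solves a linear third--order equation whose forcing is controlled by $v$ and its first derivatives.

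Next I would localize. Let $\rho:=\sqrt{x^2+y^2+z^2}$, let $\theta_R$ be smooth and radial with $\theta_R\equiv1$ on $\{\rho\le R-1\}$ and $\theta_R\equiv0$ on $\{\rho\ge R\}$, and let $\eta(t)$ be smooth with $\eta\equiv1$ on $[r,1-r]$ and $\operatorname{supp}\eta\subseteq(0,1)$; the time cutoff is forced by the fact that $v(0),v(1)$ are not controlled in this theorem. Setting $g:=\theta_R\,\eta\,v$, a direct computation gives
\[(\partial_t+\partial_x\Delta)g=\theta_R\,\eta\,\mathcal F+\eta\,[\partial_x\Delta,\theta_R]v+\theta_R\,\eta'\,v.\]
The commutator $[\partial_x\Delta,\theta_R]v$ carries at least one derivative on $\theta_R$, so it is supported in the shell $\{R-1\le\rho\le R\}$ and involves $v$-derivatives of order at most two; its weighted norm is therefore controlled by $A_R(v)$.

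The heart of the matter is a Carleman inequality for the non-elliptic operator $\partial_t+\partial_x\Delta$ with a convex weight tuned to the third--order, Airy-type scaling of the operator. I would establish, for a radial convex profile $\varphi$ comparable to $\overline C\rho^{3/2}$ and a fixed large parameter $\alpha$,
\[\alpha^{3/2}\,\|e^{\alpha\varphi}g\|_{L^2(D)}\le C\,\|e^{\alpha\varphi}(\partial_t+\partial_x\Delta)g\|_{L^2(D)}\]
for $g$ of compact spatial support vanishing near $t=0,1$; the power $\rho^{3/2}$ in $\varphi$ (hence the $R^{3/2}$ of the statement) reflects exactly this scaling. Granting this, substitute $g=\theta_R\eta v$. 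The potential term $\theta_R\eta\mathcal F$, dominated by $c_0(|v|+|\partial_x v|)$, is absorbed into the left side once $\alpha$ is large enough to beat $c_0$ (using that the inequality also controls $\|e^{\alpha\varphi}\partial_x g\|$); the commutator term is bounded by $Ce^{\alpha\overline CR^{3/2}}A_R(v)$ because it lives on the shell. Bounding the left side below by its restriction to $Q$, where $\theta_R\equiv\eta\equiv1$ and the weight is $O(1)$, and renaming the resulting constant in the exponent as $14\overline C$, yields $\|v\|_{L^2(Q)}\le Ce^{14\overline CR^{3/2}}A_R(v)$, which is the assertion.

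The main obstacle is the Carleman step, in two respects. First, since $\partial_t+\partial_x\Delta$ is not elliptic, proving the weighted inequality with the sharp $\alpha^{3/2}$ gain requires a careful analysis of the conjugated operator $e^{\alpha\varphi}(\partial_t+\partial_x\Delta)e^{-\alpha\varphi}$---either by a positive-commutator / integration-by-parts argument with a weight satisfying the appropriate pseudoconvexity condition for $\partial_t+\partial_x\Delta$, or by the partial-fraction and Fourier-transform route the authors use for Theorem \ref{Carleman}. Second, the time cutoff produces the error $\theta_R\eta' v$, which is supported near $t=0,1$ over the whole ball and is a priori \emph{not} controlled by $A_R(v)$; taming it forces $\varphi$ to be taken space--time and to decay near $t=0,1$, so that this term is dominated on the left side. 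This is precisely where the dependence $\overline C=\overline C(r)$ enters, a thinner collar $[r,1-r]$ making both $\eta'$ and the required weight steeper. Once $\varphi$ is engineered to reconcile the pseudoconvexity condition with this decay, the remaining estimates are routine.
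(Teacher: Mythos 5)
Your outline captures the general EKPV philosophy, but it diverges from the mechanism that actually makes the paper's proof (Lemma \ref{EstInf1} plus the argument of Theorem 1.3 in \cite{BIM2013}) work, and the divergence is where the gap lies. The paper's weight is not a radial convex $\rho^{3/2}$ profile with a fixed parameter: it is the quadratic $\psi=\alpha[(\tfrac xR+\phi(t))^2+\tfrac{y^2}{R^2}+\tfrac{z^2}{R^2}]$ with a \emph{moving center} $\phi(t)$ (vanishing at $t=0,1$, of size about $3$ on $[r,1-r]$) and with $\alpha\ge\overline CR^{3/2}$; the localization is done by a cutoff in the variable $\tfrac xR+\phi(t)$ together with the spatial truncation at the shell, so that $g(0)=g(1)=0$ holds automatically and the support hypothesis (iii) of Lemma \ref{EstInf1} is met. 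The commutator terms coming from the cutoff in $\tfrac xR+\phi(t)$ are supported where the weight is at most $e^{c_1\alpha}$, while the left-hand side restricted to $Q$ carries weight at least $e^{c_2\alpha}$ with $c_2>c_1$, and it is precisely the hypothesis $\|v\|_{L^2(Q)}\ge\delta$, together with the a priori $H^3$ bound on $v$, that allows these errors to be absorbed for $R\ge R_0$. Your scheme replaces this by a plain time cutoff $\eta(t)$ and claims the resulting error $\theta_R\,\eta'\,v$ can be ``dominated on the left side'' by letting the weight decay near $t=0,1$. That does not work as stated: this term sits on the right-hand side of the Carleman inequality, is supported on the whole ball $\rho\le R$ near $t=0,1$, is not controlled by $A_R(v)$, and there is no absorption mechanism without the quantitative comparison against $\delta$ — which your argument never invokes. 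Indeed, your proposal would yield the conclusion with constants independent of $\delta$, which is stronger than the theorem and not attainable by this method; the hypothesis $\|v\|_{L^2(Q)}\ge\delta$ is essential and must enter exactly at this absorption step.

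The second gap is the Carleman estimate itself, which you assert rather than prove. The inequality $\alpha^{3/2}\|e^{\alpha\varphi}g\|_{L^2(D)}\le C\|e^{\alpha\varphi}(\partial_t+\partial_x\Delta)g\|_{L^2(D)}$ with $\varphi\sim\overline C\rho^{3/2}$ cannot be obtained by the Fourier/partial-fraction route of Theorem \ref{Carleman}: that method works only because the weights $e^{\lambda|x|}e^{\beta|y|}e^{\beta|z|}$ conjugate $\partial_t+\partial_x\Delta$ into a constant-coefficient operator, which fails for $e^{\alpha\rho^{3/2}}$. The integration-by-parts route is exactly the content of Lemma \ref{EstInf1}, and its computation leans heavily on the special structure of the quadratic weight ($\psi_{xxx}=\psi_{xy}=\psi_{xz}=0$, $\psi_{xx}=\psi_{yy}=\psi_{zz}=2\alpha/R^2$), on the coupling $\alpha\ge\overline CR^{3/2}$, and on the support restriction $|\tfrac xR+\phi(t)|\ge1$, which is what makes the leading coefficients in \eqref{INF34}--\eqref{INF36} positive; with a radial $\rho^{3/2}$ weight (not even smooth at the origin) none of these simplifications is available, no analogue of the support condition is imposed, and no pseudoconvexity has been verified for this anisotropic, non-elliptic operator. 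Finally, your displayed inequality contains no $\|e^{\alpha\varphi}\partial_xg\|$ term on the left, yet you need exactly that term (as in \eqref{INF1}) to absorb $u_1\partial_xg$. Until the weighted estimate is proved with the gradient term and the endpoint errors are handled via the $\delta$-hypothesis (or via a moving weight that eliminates them, as the paper does), the argument is incomplete at its two decisive points.
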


The central part of the article is the proof of Lemmas \ref{dos} and \ref{EstInf1}, stated below. The proof of Lemma \ref{dos} is an improved version of the proof of the corresponding Lemma in \cite{BIM2013}, while the proof of Lemma \ref{EstInf1}, although it follows the same course of the proof of the corresponding Lemma in \cite{BIM2013}, is technically more complicated due to the change of two to three dimensions.

This article is organized in the same way as \cite{BIM2013}: in section  \ref{DOS} we prove the Carleman estimates stated in Theorem \ref{Carleman}. Section \ref{TRES} contains the proof of Theorem \ref{Inferior}. The proof of the main result, Theorem \ref{principal}, is given in section \ref{CUATRO}.

Throughout the paper the letter $C$ will denote diverse constants, which may change from line to line, and whose dependence on certain parameters is clearly established in all cases. Sometimes, for a parameter $a$, we will use the notation $C_a$ to make emphasis in the fact that the constant $C$ depends upon $a$. We frequently write $f(\cdot_s)$ to denote a function $s\mapsto f(s)$.

\section{Carleman Type Estimates (Proof of Theorem \ref{Carleman})}\label{DOS}

The proof of Theorem \ref{Carleman}, as in \cite{BIM2013}, is based on two lemmas. These lemmas express boundedness properties of the operators $T_0$ and $(\partial_x-\lambda)^k(\partial_y-\beta)^{l}(\partial_z-\beta)^nT_0$, where $k$, $l$ and $n$ are nonnegative  integers with $0< k+l+n\leq 2$, and  $T_0$ is the inverse operator of
\begin{equation*}
g\mapsto e^{\lambda x}e^{\beta y}e^{\beta z}(\partial_t+\partial_x \Delta)e^{-\beta z}e^{-\beta y}e^{-\lambda x}g\,.\label{T0}\end{equation*}
The operator $T_0$ is defined through the space-time Fourier transform by the multiplier
\begin{align*}m_0(\xi,\eta,\zeta,\tau):=\frac1{i\tau+(i\xi-\lambda)^3+(i\xi-\lambda)(i\eta-\beta)^2+(i\xi-\lambda)(i\zeta-\beta)^2}\,.
\end{align*}

The first lemma, whose proof is analogous to the one of Lemma 2.1 in \cite{BIM2013}, is the following.

\begin{lemma}\label{uno} Let  $h\in L^1(\mathbb{R}^4)$ with $\|h\|_{L_t^1L_{xyz}^2(\mathbb{R}^4)}<\infty$. Then for all $(\lambda,\beta)\ne(0,0)$, $m_0\widehat{h}\in S'(\mathbb {R}^4)$ and $[m_0\widehat{h}]\veee$ defines a bounded function from $\mathbb{R}_t$ with values in $L_{xyz}^2$. Besides,
\begin{align*}\|[m_0\widehat{h}]{\,}\widecheck{\;}\,(\cdot_x,\cdot_y,\cdot_z,t)\|_{L_{xyz}^2(\mathbb{R}^3)}\leq\|h\|_{L_t^1L_{xyz}^2(\mathbb{R}^4)}\quad\forall t\in\mathbb{R}\,,
\end{align*}
where $\widehat{\;}$  and  $\veee$ denote, respectively, the Fourier transform and its inverse in $S'(\mathbb R^4)$.
\end{lemma}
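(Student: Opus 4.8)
The plan is to show that the multiplier $m_0$ is bounded, so that $m_0\widehat h$ is a tempered distribution, and then to exploit the specific structure of $m_0$ as a function of $\tau$ to obtain the claimed bound via a direct Fourier computation in the $t$-variable alone. First I would verify the boundedness of $m_0$. Writing the denominator as $i\tau + P(\xi,\eta,\zeta)$, where $P(\xi,\eta,\zeta):=(i\xi-\lambda)^3+(i\xi-\lambda)(i\eta-\beta)^2+(i\xi-\lambda)(i\zeta-\beta)^2$, the real part of the denominator is $\operatorname{Re}P$, which one computes explicitly and checks is bounded away from zero (in absolute value) by a quantity controlled by $\lambda,\beta$ when $(\lambda,\beta)\neq(0,0)$; this is where the hypothesis $(\lambda,\beta)\neq(0,0)$ enters. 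Consequently $|m_0|\leq C_{\lambda,\beta}$ uniformly in $(\xi,\eta,\zeta,\tau)$, so multiplication by $m_0$ maps $S'$ into $S'$ and $m_0\widehat h\in S'(\mathbb R^4)$ is well defined.

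The heart of the argument is the pointwise-in-$t$ estimate, and the natural route is to take the Fourier transform only in the spatial variables $(x,y,z)$ and leave $t$ alone. Fixing $(\xi,\eta,\zeta)$, I would regard the operator as solving, for each spatial frequency, a first-order linear ODE in $t$ of the form $\partial_t U + P(\xi,\eta,\zeta)\,U = \widehat h(\xi,\eta,\zeta,t)$ (the spatial Fourier transform of $h$), whose symbol in the full space-time transform is exactly $m_0$. The key analytic fact is that $\operatorname{Re}P\geq 0$ for one choice of orientation (or $\leq 0$ for the other), which follows from the sign structure of the cubic; this makes the fundamental solution $e^{-P(\xi,\eta,\zeta)(t-s)}$ a contraction for $t>s$ (respectively $t<s$). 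Then by Duhamel's formula and the fact that $|e^{-P(t-s)}|=e^{-(\operatorname{Re}P)(t-s)}\leq 1$ on the appropriate half-line, I obtain
\begin{align*}
|U(\xi,\eta,\zeta,t)|\leq \int_{\mathbb R}|\widehat h(\xi,\eta,\zeta,s)|\,ds\,.
\end{align*}

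Squaring, integrating in $(\xi,\eta,\zeta)$, applying Minkowski's integral inequality to pull the $L^2_{\xi\eta\zeta}$ norm inside the $s$-integral, and then invoking Plancherel in the spatial variables converts $\|\,\cdot\,\|_{L^2_{\xi\eta\zeta}}$ into $\|\,\cdot\,\|_{L^2_{xyz}}$, which yields precisely
\begin{align*}
\|[m_0\widehat h]\,\widecheck{\;}\,(\cdot_x,\cdot_y,\cdot_z,t)\|_{L^2_{xyz}(\mathbb R^3)}\leq \int_{\mathbb R}\|h(\cdot_x,\cdot_y,\cdot_z,s)\|_{L^2_{xyz}}\,ds=\|h\|_{L^1_tL^2_{xyz}(\mathbb R^4)}
\end{align*}
for every $t$, and the uniform bound in $t$ gives the asserted boundedness of $t\mapsto[m_0\widehat h]\,\widecheck{\;}$ with values in $L^2_{xyz}$.

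The main obstacle I anticipate is the sign analysis of $\operatorname{Re}P$: one must confirm that the real part of $P(\xi,\eta,\zeta)$ has a definite sign (say nonnegative) for all real $(\xi,\eta,\zeta)$ so that the propagator $e^{-P(t-s)}$ is genuinely a contraction on a fixed half-line and Duhamel applies cleanly. Expanding, $\operatorname{Re}(i\xi-\lambda)^3=-\lambda^3+3\lambda\xi^2$ and $\operatorname{Re}\big[(i\xi-\lambda)(i\eta-\beta)^2\big]=-\lambda(\beta^2-\eta^2)-2\xi\eta\beta$, so the sign is not manifest and requires care to control the cross terms; this is also the step where the hypothesis $\lambda,\beta>0$ (rather than merely $(\lambda,\beta)\neq(0,0)$) is presumably used, and where aligning the orientation of the $t$-integration with the sign of $\operatorname{Re}P$ matters. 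A secondary technical point is justifying the interchange of integrals (Minkowski) and the passage from the distributional definition of $m_0\widehat h$ to the concrete Duhamel representation, which is routine once the multiplier is known to be bounded and $h\in L^1_tL^2_{xyz}$.
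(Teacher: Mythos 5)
Your overall strategy (freeze the spatial frequency, view $m_0$ as the symbol of the ODE $\partial_t U+P(\xi,\eta,\zeta)U=\widehat h$, bound the propagator by $1$, then Minkowski plus Plancherel) is indeed the standard route and is in the spirit of the paper's source argument, but the two analytic claims your proof hinges on are false. First, $m_0$ is \emph{not} bounded: the denominator is $\operatorname{Re}P+i(\tau+\operatorname{Im}P)$ with
\[
\operatorname{Re}P(\xi,\eta,\zeta)=\lambda(3\xi^2-\lambda^2)+\lambda(\eta^2-\beta^2)+2\beta\xi\eta+\lambda(\zeta^2-\beta^2)+2\beta\xi\zeta,
\]
and this polynomial changes sign (for $\lambda,\beta>0$ it equals $-\lambda^3-2\lambda\beta^2<0$ at the origin and is positive for large $\xi$ with $\eta=\zeta=0$), so it vanishes on a nontrivial surface; on that surface one can take $\tau=-\operatorname{Im}P$ and the denominator vanishes. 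Hence $|m_0|\leq C_{\lambda,\beta}$ fails, and the membership $m_0\widehat h\in S'(\mathbb R^4)$ has to be argued differently (e.g.\ $\widehat h\in L^\infty$ because $h\in L^1$, together with local integrability of $m_0$, whose singularity is only logarithmic after integrating in $\tau$ since $\operatorname{Re}P$ is a nonzero polynomial). Second, and more seriously, the ``key analytic fact'' you rely on --- that $\operatorname{Re}P$ has a definite sign for all real frequencies, so that a single Duhamel formula with one fixed orientation of the $t$-integration works --- is exactly the same false statement; you flagged it as the main obstacle to check, and the check fails, so the proof as written does not go through.

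The repair is the frequency-dependent version of your own idea, which is what the paper (via Lemma~2.1 of the cited two-dimensional work) actually does: for a.e.\ fixed $(\xi,\eta,\zeta)$ one has $\operatorname{Re}P\neq0$, and the one-dimensional inverse Fourier transform of $\tau\mapsto(i\tau+P)^{-1}$ is $\sqrt{2\pi}\,\chi_{(0,\infty)}(t)e^{-Pt}$ when $\operatorname{Re}P>0$ and $-\sqrt{2\pi}\,\chi_{(-\infty,0)}(t)e^{-Pt}$ when $\operatorname{Re}P<0$; in both cases its modulus is at most $\sqrt{2\pi}$, i.e.\ the decaying half-line is chosen separately at each spatial frequency rather than once and for all. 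With that kernel bound, your final steps (write the solution as a $t$-convolution at each frequency, apply Minkowski's integral inequality and Plancherel in $(x,y,z)$) do give the stated estimate with constant $1$, uniformly in $(\lambda,\beta)\neq(0,0)$. Note also that the lemma assumes only $(\lambda,\beta)\neq(0,0)$, not $\lambda,\beta>0$, and the corrected argument needs no sign hypothesis at all --- only that $\operatorname{Re}P$ is not identically zero, which is what $(\lambda,\beta)\neq(0,0)$ guarantees.
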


The second lemma is similar to Lemma 2.2 in \cite{BIM2013}. Its proof also follows the ideas of the proof of Lemma 2.2 in \cite{BIM2013} but it involves some significant changes and simplifications, and by this reason we present it in detail.

\begin{lemma}\label{dos} Let $h\in L^1(\mathbb R^4)$ with $\|h\|_{L_x^1L_{tyz}^2(\mathbb R^4)}<\infty$. For $\beta\geq 1$, $\lambda\in\mathbb R$, $k,l,n\in\{0,1,2\}$, and $0< k+ l+n\leq 2$, let
\[m_{k,l,n}(\xi,\eta,\zeta,\tau):=(i\xi-\lambda)^k(i\eta-\beta)^{l}(i\zeta-\beta)^nm_0(\xi,\eta,\zeta,\tau)\,,\]
where $m_0$ is defined by 
\begin{align*}m_0(\xi,\eta,\zeta,\tau):=\frac1{i\tau+(i\xi-\lambda)^3+(i\xi-\lambda)(i\eta-\beta)^2+(i\xi-\lambda)(i\zeta-\beta)^2}\,.
\end{align*}
 Then $m_{k,l,n}\widehat h\in S'(\mathbb R^4)$ and there are  constants $C_1>0$ and $C_2>1$, independent from $h$ and $\beta$, such that if $\lambda\geq C_2\beta$ then
\begin{equation*}\|[m_{k,l,n}\widehat h]{\veee}\|_{L_x^{\infty}L_{tyz}^2(\mathbb R^4)}\leq C_1\|h\|_{L_x^1L_{tyz}^2(\mathbb R^4)}\,.\label{mult1}
\end{equation*}
\end{lemma}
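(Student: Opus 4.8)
The plan is to establish an $L^\infty_x L^2_{tyz}$ bound for the Fourier multiplier operator associated with $m_{k,l,n}$ by reducing the four-dimensional problem to a one-dimensional convolution in the $x$-variable, with the $t,y,z$ variables handled by Plancherel. First I would take the Fourier transform only in $(t,y,z)$, so that for fixed dual variables $(\tau,\eta,\zeta)$ the multiplier $m_{k,l,n}$ becomes a rational function of the single variable $\xi$. Writing the denominator of $m_0$ as a cubic polynomial $P(i\xi)$ in $(i\xi-\lambda)$ with coefficients depending on $(\tau,\eta,\zeta)$, the action of the operator in $x$ is convolution against the inverse $\xi$-Fourier transform $G_{\tau,\eta,\zeta}$ of $m_{k,l,n}(\cdot,\eta,\zeta,\tau)$. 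By Minkowski's integral inequality and Plancherel in $(t,y,z)$, it then suffices to bound $\sup_{\tau,\eta,\zeta}\|G_{\tau,\eta,\zeta}\|_{L^1_x(\mathbb R)}$ by a constant $C_1$ uniformly in the parameters, provided $\lambda\geq C_2\beta$.

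The core of the argument is therefore a partial fraction decomposition of the rational function $m_{k,l,n}$ in the variable $\xi$. The denominator is a cubic in $(i\xi-\lambda)$, so I would factor it as $(i\xi-\lambda-\rho_1)(i\xi-\lambda-\rho_2)(i\xi-\lambda-\rho_3)$ where the $\rho_j$ are the three roots (as functions of $\tau,\eta,\zeta$), and expand $m_{k,l,n}$ into simple fractions of the form $c_j/(i\xi-\lambda-\rho_j)$ (with possibly a polynomial part when $k+l+n$ raises the numerator degree, which must be controlled separately). The inverse $\xi$-Fourier transform of each simple factor is, up to constants, an exponential $e^{(\lambda+\rho_j)x}$ cut off by a Heaviside function according to the sign of $\operatorname{Re}\rho_j$; its $L^1_x$ norm is governed by $1/|\operatorname{Re}(\lambda+\rho_j)|$. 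The numerators $c_j$ are obtained by evaluating residues, and I would bound them together with the reciprocal real parts to produce the factor $C_1$.

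The main obstacle, and the step where the hypothesis $\lambda\geq C_2\beta$ is essential, is controlling the roots $\rho_j$ uniformly in $(\tau,\eta,\zeta)$: one must show that each $\operatorname{Re}(\lambda+\rho_j)$ stays bounded away from zero so that the exponentials are genuinely integrable, and simultaneously that the residue coefficients do not blow up. I expect this to require a careful case analysis splitting the parameter region according to the relative sizes of $|\tau|$, $\eta^2+\zeta^2$ and $\lambda,\beta$, estimating the roots of the cubic in each regime. The term $(i\xi-\lambda)(i\eta-\beta)^2+(i\xi-\lambda)(i\zeta-\beta)^2$ contributes a factor $(i\xi-\lambda)$ times $((i\eta-\beta)^2+(i\zeta-\beta)^2)$, and the choice $\lambda\geq C_2\beta$ with $C_2$ large guarantees that the real part $-\lambda$ dominates, keeping the real parts of all roots comparable to $\lambda$ in magnitude and of the correct sign. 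Once the uniform bound $\sup_{\tau,\eta,\zeta}\|G_{\tau,\eta,\zeta}\|_{L^1_x}\leq C_1$ is secured, the passage back through Minkowski and Plancherel to the full $L^\infty_x L^2_{tyz}$ estimate, as well as the verification that $m_{k,l,n}\widehat h\in S'(\mathbb R^4)$, is routine.
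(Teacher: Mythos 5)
Your reduction sends you after the wrong norm, and that is where the argument breaks. To obtain the smoothing bound $\|G_{\tau,\eta,\zeta}\ast g\|_{L^\infty_x}\leq C\|g\|_{L^1_x}$ (with $g$ the partial Fourier transform of $h$ in $(t,y,z)$ at frozen $(\tau,\eta,\zeta)$) you need the kernel $G_{\tau,\eta,\zeta}$ bounded in $L^\infty_x$ uniformly in the parameters; a uniform $L^1_x$ bound on the kernel only yields $L^p_x\to L^p_x$ estimates via Young's inequality, never $L^1_x\to L^\infty_x$. Worse, the uniform $L^1_x$ bound you aim for is false: the $L^1_x$ norm of the one-sided exponential produced by a simple pole is of size $1/|\operatorname{Re}(\lambda+\rho_j)|$, and the roots of the cubic depend on $\tau$, which is not constrained by the hypothesis $\lambda\geq C_2\beta$. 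For large $|\tau|$ the three roots behave like the cube roots of $\pm i\tau$ (of size $|\tau|^{1/3}$, with arguments spread by $2\pi/3$), so $\operatorname{Re}(\lambda+\rho_j)$ passes through $0$ as $\tau$ varies no matter how large $\lambda$ is; the condition $\lambda\geq C_2\beta$ cannot keep the poles away from the real $\xi$-axis, so your ``main obstacle'' cannot be overcome. This is exactly why the paper uses only the $L^\infty_x$ bound: a one-sided exponential $\chi_{(0,\infty)}(x)e^{-bx}$ (or its mirror image) has $L^\infty_x$ norm $\sqrt{2\pi}$ no matter how small $b>0$ is, and the exceptional set where a pole lies exactly on the axis has measure zero in $(\eta,\zeta,\tau)$, so it can be ignored.

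What must then be controlled uniformly is not the decay rates but the residues $A_j$, together with the degenerate case of multiple roots, where a simple-pole partial fraction decomposition is unavailable; your sketch does not address either point. The paper handles both through the discriminant-type function $T_\beta(\eta,\zeta,\tau)=w^3/\tau^2+27/4$ with $w=(\eta+i\beta)^2+(\zeta+i\beta)^2$: a scaling argument shows $|T_\beta|\geq 27/8$ outside a compact set $K_\beta$ with $|\eta|,|\zeta|\leq C\beta$ and $|\tau|\leq C\beta^3$, and there the roots are simple and satisfy $|3v_j^2+w|\geq\delta|w|$, which bounds the residues by an absolute constant; inside $K_\beta$ the hypothesis $\lambda\geq C_2\beta$ gives $|P(v)|\geq\tfrac{14}{27}|\xi+i\lambda|^3$, and one writes $m_{2,0,0}=-i/(\xi+i\lambda)+B$ with $B$ absolutely integrable in $\xi$, each piece having a uniformly bounded inverse $\xi$-transform. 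Your overall skeleton (Fourier transform in $(t,y,z)$, partial fractions in $\xi$, Plancherel and Minkowski at the end) matches the paper, but the two load-bearing steps --- which kernel norm closes the estimate, and what $\lambda\geq C_2\beta$ actually buys --- are wrong as stated, so the proof does not go through without being rebuilt along these lines. (A minor remark: for $k+l+n\leq 2$ the numerator degree is at most two, so no polynomial part ever appears in the decomposition.)
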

\begin{proof} We will consider only the case $k=2$ and $l=n=0$. The proof of the other cases is similar. Let us observe that 
\[m_{2,0,0}(\xi,\eta,\zeta,\tau)=\frac{-iv^2}{v^3+wv-\tau}\,,\]
where $v:=\xi+i\lambda$ and $w:=(\eta+i\beta)^2+(\zeta+i\beta)^2$. If the polynomial $P(v):=v^3+wv-\tau$ has multiple roots then \[\frac{w^3}{\tau^2}+\frac{27}4=0\,.\]
Let us define  the function $T_{\beta}:\mathbb R^3\longrightarrow\mathbb C$ by
\[T_{\beta}(\eta,\zeta,\tau):=\frac{w^3}{\tau^2}+\frac{27}4\,.\]
Let us observe that if $P$ has multiple roots then $T_{\beta}(\eta,\zeta,\tau)=0$ and that
\[T_{\beta}(\eta,\zeta,\tau)=T_{1}(\frac{\eta}{\beta},\frac{\zeta}{\beta},\frac{\tau}{\beta^3})\,.\]
Now we will prove that there exists a compact set $K$, $K\subset\mathbb{R}^3$, such that for $(\eta,\zeta,\tau)\in\mathbb{R}^3-K$ we have
\[|T_1(\eta,\zeta,\tau)|\geq\frac{27}8\,.\]
From an easy calculation it follows that
\begin{align*}|T_1(\eta,\zeta,\tau)|^2&\geq\Big\{\frac{(\eta^2+\zeta^2-2)[(\eta^2+\zeta^2-2)^2-12(\eta+\zeta)^2]}{\tau^2}+\frac{27}4\Big\}^2\\
&=\Big\{\frac{(\eta^2+\zeta^2-2)[(\eta^2+\zeta^2)^2-28(\eta^2+\zeta^2)+12(\eta-\zeta)^2+4]}{\tau^2}+\frac{27}4\Big\}^2\,.
\end{align*}
Let us take $R>0$ such that
\[(\eta^2+\zeta^2-2)\geq\frac12(\eta^2+\zeta^2)\quad\text{and}\quad(\eta^2+\zeta^2)^2-28(\eta^2+\zeta^2)+12(\eta-\zeta)^2+4\geq\frac12(\eta^2+\zeta^2)^2\]
 for $(\eta^2+\zeta^2)>R^2$. Then for $(\eta,\zeta,\tau)\in\mathbb{R}^3$ with $(\eta^2+\zeta^2)>R^2$ we have
 \[|T_1(\eta,\zeta,\tau)|^2\geq\Big\{\frac14\frac{(\eta^2+\zeta^2)^3}{\tau^2}+\frac{27}4\Big\}^2\geq\Big(\frac{27}8\Big)^2\,.\]
 Now suppose  $(\eta^2+\zeta^2)\leq R^2$. There exists $R'>0$ such that if $|\tau|\geq R'$ then
\[\Big|\frac{(\eta^2+\zeta^2-2)[(\eta^2+\zeta^2)^2-28(\eta^2+\zeta^2)+12(\eta-\zeta)^2+4]}{\tau^2}\Big|\leq\frac{27}8\,.\]
In consequence for $(\eta,\zeta,\tau)\in\mathbb{R}^3$ with $(\eta^2+\zeta^2)\leq R^2$ and $|\tau|\geq R'$ we have 
\[|T_1(\eta,\zeta,\tau)|^2\geq\Big(\frac{27}8\Big)^2\,.\] 
We define the compact $K$ to be the set $\{(\eta,\zeta,\tau)\mid (\eta^2+\zeta^2)\leq R^2\;\text{and}\;|\tau|\leq R'\}$. \newline
Now we define the compact set $K_{\beta}$ to be the set $\{(\eta,\zeta,\tau)\mid (\frac{\eta}{\beta},\frac{\zeta}{\beta},\frac{\tau}{\beta^3})\in K\}$. Let $C>1$ such that, for every $(\eta',\zeta',\tau')\in K$, $\max \{|\eta'|, |\zeta'|, |\tau'|\}\leq C$. Then
\begin{equation}\forall (\eta,\zeta,\tau)\in K_{\beta}\quad |\eta|\leq C\beta\,,|\zeta|\leq C\beta\,,|\tau|\leq C\beta^3\,.\label{Kbeta}
\end{equation}
 i) If $(\eta,\zeta,\tau)\in\mathbb R^3-K_{\beta}$  then $|T_{\beta}(\eta,\zeta,\tau)|=|T_{1}(\frac{\eta}{\beta},\frac{\zeta}{\beta},\frac{\tau}{\beta^3})|\geq\frac{27}8$ and in consequence the polynomial $P$ does not have multiple roots, and we can use decomposition in partial fractions to obtain
 \begin{equation}m_{2,0,0}=\sum_{j=1}^3\frac{A_j}{v-v_j}=\sum_{j=1}^3\frac{A_j}{\xi-\operatorname{Re}(v_j)-i[\operatorname{Im}(v_j)-\lambda]}\,,\label{PF}
 \end{equation}
 where $v_j$, $j=1,2,3$, are the different roots of $P$ and
 \begin{equation}A_j:=\lim_{v\to v_j}\frac{(v-v_j)(-iv^2)}{P(v)}=-\frac{iv_j^2}{3v_j^2+w}=-i\big[\frac13-\frac13\frac{w}{3v_j^2+w}\big]\,.\label{Aj}
 \end{equation}
 Now let us see that there is $\delta\in(0,1)$ such that if $(\eta,\zeta,\tau)\in\mathbb R^3-K_{\beta}$ and $v_j\in\mathbb C$ is a root of polynomial $P$ then 
 \begin{equation}|3v_j^2+w|\geq\delta|w|\,.\label{root}\end{equation}
 In other words, let us prove that there is $\delta>0$ such that if $|3v_j^2+w|<\delta|w|$ and $v_j\in\mathbb C$ is a root of polynomial $P$ then $(\eta,\zeta,\tau)\in K_{\beta}$. Taking into account that $v_j$ is a root of $P$ if and only if 
 \begin{equation}\frac1{\frac{v_j^2}{w}\big(\frac{v_j^2}{w}+1\big)^2}=\frac{w^3}{\tau^2}\,,\label{v_j}
 \end{equation}
 let us prove that there is $\delta>0$ such that if $|\frac{v_j^2}{w}+\frac13|<\frac{\delta}3$ and $\frac1{\frac{v_j^2}{w}\big(\frac{v_j^2}{w}+1\big)^2}=\frac{w^3}{\tau^2}$, then $(\eta,\zeta,\tau)\in K_{\beta}$. \newline
 By the continuity of the complex-valued function $\frac1{z(z+1)^2}$ in $z=-\frac13$,  there is $\delta\in(0,1)$ such that if $|z+ \frac13|<\frac{\delta}{3}$ then $|\frac1{z(z+1)^2}+\frac{27}4|<\frac{27}8$. Hence if  $\big|\frac{v_j^2}{w}+\frac13\big|<\frac{\delta}{3}$, we have that 
 \[\Big|\frac1{\frac{v_j^2}{w}\big(\frac{v_j^2}{w}+1\big)^2}+\frac{27}4\Big|<\frac{27}8\,,\]
 and by \eqref{v_j} we conclude that
 \[\Big|\frac{w^3}{\tau^2}+\frac{27}4\Big|<\frac{27}8\,,\]
i.e. $|T_{\beta}(\eta,\zeta,\tau)|<\frac{27}8$ and, in consequence,  $(\eta,\zeta,\tau)\in K_{\beta}$.\newline
 From \eqref{Aj} and \eqref{root} we conclude that if $(\eta,\zeta,\tau)\in\mathbb R^3-K_{\beta}$, then
 \begin{equation}|A_j|\leq\frac13+\frac1{3\delta}\;,\quad j=1,2,3. \label{Aj1}
 \end{equation}
 On the other hand, let us observe that the set $A:=\cup_{j=1}^{3}\{(\eta,\zeta,\tau)\mid \operatorname{Im}(v_j)-\lambda=0\}$ has three-dimensional measure $0$.
 
 For $(\eta,\zeta,\tau)\in(\mathbb R^3-K_{\beta})-A$, $x\in\mathbb R$ and $j=1,2,3$ let us define
 \begin{equation*}
f_j(x)\equiv f_{j,\eta,\zeta,\tau}(x)= \left\{ \begin{array}{ll}
\frac{\sqrt{2\pi}}i \,\chi_{_{_{_{(0,+\infty)}}}}(x) e^{-[\operatorname{Im}(v_j)-\lambda]x} \,\, &\mbox{if $\operatorname{Im}(v_j)-\lambda>0$}, \\ \\
-\frac{\sqrt{2\pi}}i\, \chi_{_{_{_{(-\infty,0)}}}}(x) e^{-[\operatorname{Im}(v_j)-\lambda]x} \,\, &\mbox{if $\operatorname{Im}(v_j)-\lambda<0$}. \end{array} \right. 
\end{equation*}
Then from \eqref{PF} and \eqref{Aj1} it is clear that 
\begin{align}|[m_{2,0,0}(\cdot_{\xi},\eta,\zeta,\tau)]\veeexi(x)|&=\big|\sum_{j=1}^3A_je^{i\operatorname{Re}(v_j)x}f_j(x)\big|\notag\\
&\leq3\sqrt{2\pi}(\frac13+\frac1{3\delta})=\sqrt{2\pi}(1+\frac1{\delta})\,.\label{m20}
\end{align}
ii) If $(\eta,\zeta,\tau)\in K_{\beta}$ , using \eqref{Kbeta} and the fact that $C>1$,  for $\lambda\geq3C\beta$ it follows that 
\begin{align*}|P(v)|=|v^3+wv-\tau|&\geq|\xi+i\lambda|^3-|\eta+i\beta|^2|\xi+i\lambda|-|\zeta+i\beta|^2|\xi+i\lambda|-|\tau|\\
&\geq|\xi+i\lambda|^3-2(C^2+1)\beta^2|\xi+i\lambda|-C\beta^3\\
&\geq|\xi+i\lambda|^3-2(C^2+1)\frac{|\xi+i\lambda|^3}{9C^2}-C\frac{|\xi+i\lambda|^3}{27C^3}\\
&\geq(1-\frac49-\frac1{27})|\xi+i\lambda|^3=\frac{14}{27}|\xi+i\lambda|^3\,.
\end{align*}
Then,
\begin{align*}m_{2,0,0}(\xi,\eta,\zeta,\tau)&=-\frac{i}{\xi+i\lambda}+\big(\frac{-iv^2}{P(v)}+\frac{i}{v}\big)\\
&=-\frac{i}{\xi+i\lambda}+\big(\frac{iwv-i\tau}{vP(v)}\big)\equiv -\frac{i}{\xi+i\lambda}+B(\xi,\eta,\zeta,\tau)\,,
\end{align*}
where
\begin{align*}|B(\xi,\eta,\zeta,\tau)|&\leq\frac{27}{14}\frac1{|\xi+i\lambda|^4}|i(\eta+i\beta)^2(\xi+i\lambda)+i(\zeta+i\beta)^2(\xi+i\lambda)-i\tau|\\
&\leq\frac{27}{14}\frac1{|\xi+i\lambda|^4}2(C^2+1)\beta^2|\xi+i\lambda|+\frac{27}{14}\frac1{|\xi+i\lambda|^4}C\beta^3\\
&\leq\frac{27}{14}\frac1{|\xi+i\lambda|^3}2(C^2+1)\frac{\lambda^2}{9C^2}+\frac{27}{14}\frac1{|\xi+i\lambda|^4}C\frac{\lambda^3}{27C^3}\\
&\leq \frac67\frac{\lambda^2}{|\xi+i\lambda|^3}+\frac1{14}\frac{\lambda^3}{|\xi+i\lambda|^4}\,.
\end{align*}
In consequence,
\begin{align}|[m_{2,0,0}(\cdot_{\xi},\eta,\zeta,\tau)]\veeexi(x)|&\leq\big|\big(\frac{-i}{\xi+i\lambda}\big)\veeexi(x)\big|+|[B(\cdot_{\xi},\eta,\zeta,\tau)]\veeexi(x)|\notag\\
&\leq\sqrt{2\pi}+\big|\frac1{\sqrt{2\pi}}\int_{\mathbb R_\xi}e^{ix\xi}B(\xi,\eta,\zeta,\tau)d\xi\big|\notag\\
\notag&\leq\sqrt{2\pi}+\frac1{\sqrt{2\pi}}\frac67\int_{\mathbb R_\xi}\frac{\lambda^2}{|\xi+i\lambda|^3}d\xi+\frac1{\sqrt{2\pi}}\frac1{14}\int_{\mathbb R_\xi}\frac{\lambda^3}{|\xi+i\lambda|^4}d\xi\\
\notag&\leq\sqrt{2\pi}+\frac1{\sqrt{2\pi}}\frac67(\frac1{\lambda}+2\sqrt2)+\frac1{\sqrt{2\pi}}\frac1{14}(\frac1{\lambda}+\frac43\sqrt2)\\
&\leq\sqrt{2\pi}+\frac5{\sqrt{2\pi}}\leq6\sqrt{2\pi}\,.\label{m20bis}
\end{align}
From \eqref{m20} and \eqref{m20bis} we conclude that for $\lambda\geq3C\beta$ and $\beta\geq1$ and a.e. $(\eta,\zeta,\tau)\in\mathbb R^3$:
\begin{equation}\|[m_{2,0,0}(\cdot_{\xi},\eta,\zeta,\tau)]\veeexi(\cdot_x)\|_{L_x^{\infty}(\mathbb R_x)}\leq(\sqrt{2\pi}(1+\frac1{\delta})+6\sqrt{2\pi})\equiv C_1\label{m20fin}\,.
\end{equation}
Finally, from estimate \eqref{m20fin}, by applying Plancherel's formula in $L_{tyz}^2(\mathbb R^3)$ and Minkowski's integral inequality, it follows that
\[\|[m_{2,0,0}\widehat h]\veee\|_{L_x^\infty L_{tyz}^2(\mathbb R^4)}\leq C_1\|h\|_{L_x^1L_{tyz}^2(\mathbb R^4)}\,.\]
\end{proof}

\text{\sc Remark 1.} The statement of Lemma \ref{dos} is also true if in the definition of the multipliers $m_0$ and $m_{k,l,n}$, instead of $(i\xi-\lambda)$, $(i\eta-\beta)$ and $(i\zeta-\beta)$ we consider the other seven possible triplets of the type $(i\xi\pm\lambda)$, $(i\eta\pm\beta)$ and $(i\zeta\pm\beta)$.

\text{\sc{Proof of Theorem \ref{Carleman}}}

The proof of Theorem \ref{Carleman} follows from Lemmas \ref{uno} and \ref{dos} in a similar way as it was done in the proof of Theorem 1.2 in \cite{BIM2013}.\qed

\section{A lower estimate (proof of Theorem \ref{Inferior})}\label{TRES}

In the proof of Lemma \ref{EstInf1}, basis for the proof of Theorem \ref{Inferior}, we follow the ideas contained in works \cite{I1993} and \cite{EKPV1}. This Lemma refers to a boundedness property of the inverse of the operator associated to the linear part of the  ZK equation. This property is expressed in $L^2$ spaces with an exponential weight $e^\psi$, where the exponent $\psi$ depends on $x,y,z,t$, and a free parameter $\alpha$.

\begin{lemma}\label{EstInf1}
Let $\phi:[0,1] \rightarrow \mathbb R$ be a $C^\infty$ function and let $D:=\mathbb R^3\times[0,1]$. Let us assume that $R>1$ and define
\begin{align*}
\psi(x,y,z,t):=\alpha\left[ (\dfrac xR+\phi(t))^2+\dfrac{y^2}{R^2}+\dfrac{z^2}{R^2}\right]. 
\end{align*}

Then, there is $\overline{C}=max(\| \phi'\|_{L^\infty},\| \phi''\|_{L^\infty},1)>0$ such that the inequality
\begin{equation}
\frac{\alpha^{5/2}}{R^3} \|e^\psi g\|_{L^2(D)}+\frac{\alpha^{3/2}}{R^2} \|e^\psi \partial_x g\|_{L^2(D)} \leq \sqrt 2  \|e^\psi (\partial_t+\partial_x\Delta)g\|_{L^2(D)} \label{INF1}
\end{equation}
holds if $\alpha\geq\overline CR^{3/2}$ and  $g\in C([0,1];H^3(\mathbb R^3))\cap C^1([0,1];L^2(\mathbb R^3))$ is such that
\begin{itemize}
\item[(i)] $g(0)=g(1)=0$;
\item[(ii)] there is $M>0$ such that $supp\, g(t)\subset [-M,M]\times[-M,M]\times[-M,M]$ for all $t\in[0,1]$;
\item[(iii)] $supp\,g(\cdot_t)(\cdot_x,\cdot_y,\cdot_z)\subset \{ (x,y,z,t):|\frac{x}{R}+\phi(t)|\geq 1\}$.
\end{itemize}
\end{lemma}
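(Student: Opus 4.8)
The plan is to pass to the conjugated operator and run a commutator (symmetric/antisymmetric) argument of H\"ormander--EKPV type. I would set $f:=e^\psi g$ and $L_\psi:=e^\psi(\partial_t+\partial_x\Delta)e^{-\psi}$, so that the right-hand side of \eqref{INF1} is $\sqrt2\,\|L_\psi f\|_{L^2(D)}$. Since $e^\psi\partial_x g=(\partial_x-\psi_x)f$ and $\|e^\psi g\|=\|f\|$, the inequality $(a+b)^2\le 2(a^2+b^2)$ reduces the whole lemma to the single estimate
\[
\|L_\psi f\|_{L^2(D)}^2\ \ge\ \frac{\alpha^5}{R^6}\,\|f\|_{L^2(D)}^2+\frac{\alpha^3}{R^4}\,\|(\partial_x-\psi_x)f\|_{L^2(D)}^2.
\]

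First I would split $L_\psi=S+A$ into its symmetric and antisymmetric parts in $L^2(D)$. Because $\partial_t+\partial_x\Delta$ is skew-adjoint, one has $S=\tfrac12(e^\psi L e^{-\psi}-e^{-\psi}Le^{\psi})$ and $A=\tfrac12(e^\psi Le^{-\psi}+e^{-\psi}Le^{\psi})$, both obtained by the substitution $\partial\mapsto\partial\mp\nabla\psi$ in $L=\partial_t+\partial_x\Delta$. Hypotheses (i) and (ii) force every boundary term to vanish upon integrating by parts in $t$ and in $(x,y,z)$, so that $\langle Sf,Af\rangle+\langle Af,Sf\rangle=\langle[S,A]f,f\rangle$ and hence $\|L_\psi f\|^2=\|Sf\|^2+\|Af\|^2+\langle[S,A]f,f\rangle\ge\langle[S,A]f,f\rangle$. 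Using that $\psi$ is quadratic, so that $\psi_{xy}=\psi_{xz}=0$, all third derivatives vanish, and $\psi_{xx}=\psi_{yy}=\psi_{zz}=2\alpha/R^2$, the two operators simplify to
\[
S=-\psi_t-\psi_x\Delta-2(\nabla\psi\cdot\nabla)\partial_x-\tfrac{10\alpha}{R^2}\partial_x-\psi_x|\nabla\psi|^2,
\]
\[
A=\partial_t+\partial_x\Delta+|\nabla\psi|^2\partial_x+2\psi_x(\nabla\psi\cdot\nabla)+\tfrac{10\alpha}{R^2}\psi_x.
\]

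The heart of the matter is to show that $\langle[S,A]f,f\rangle$ dominates the two target terms. The largest contribution comes from the commutator of the zeroth-order part $s_0:=-\psi_x|\nabla\psi|^2$ of $S$ with the first-order part $A_2:=|\nabla\psi|^2\partial_x+2\psi_x(\nabla\psi\cdot\nabla)$ of $A$; since $s_0$ is a multiplication operator, a direct computation using $\psi_{xx}=\psi_{yy}=\psi_{zz}=2\alpha/R^2$ produces the purely multiplicative, and crucially positive, operator
\[
[s_0,A_2]=\frac{2\alpha}{R^2}\bigl(|\nabla\psi|^4+8\,\psi_x^2|\nabla\psi|^2\bigr).
\]
Here the support hypothesis (iii) enters decisively: on $\operatorname{supp}g$ one has $|\tfrac xR+\phi(t)|\ge1$, whence $\psi_x^2=\tfrac{4\alpha^2}{R^2}(\tfrac xR+\phi)^2\ge\tfrac{4\alpha^2}{R^2}$ and $|\nabla\psi|^4\ge\psi_x^4\ge\tfrac{16\alpha^4}{R^4}(\tfrac xR+\phi)^4$. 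Thus $[s_0,A_2]\ge\tfrac{32\alpha^5}{R^6}(\tfrac xR+\phi)^4$, and since $(\tfrac xR+\phi)^4\ge(\tfrac xR+\phi)^2\ge1$ on the support, this single term simultaneously absorbs both $\tfrac{\alpha^5}{R^6}\|f\|^2$ and the weight contribution $\tfrac{\alpha^3}{R^4}\|\psi_x f\|^2=\tfrac{4\alpha^5}{R^6}\!\int(\tfrac xR+\phi)^2|f|^2$; the genuine derivative contribution $\tfrac{\alpha^3}{R^4}\|\partial_x f\|^2$ is furnished by the surviving positive second-order terms of $[S,A]$ after one further integration by parts (retaining, if needed, part of $\|Sf\|^2$).

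The main obstacle is the bookkeeping of the remaining lower-order and time-dependent commutator terms and their absorption. Every error created by the time dependence of $\psi$ --- through $\psi_t=2\alpha(\tfrac xR+\phi)\phi'$, through $\psi_{tt}$, and through $\psi_{xt}=2\alpha\phi'/R$ appearing in brackets such as $[\partial_t,s_0]$ --- is of size at most $\alpha^3\|\phi'\|_{L^\infty}/R^3$ or $\alpha^3\|\phi''\|_{L^\infty}/R^3$. The choice $\overline C=\max(\|\phi'\|_{L^\infty},\|\phi''\|_{L^\infty},1)$ together with $\alpha\ge\overline CR^{3/2}$ gives $\alpha^2\ge\overline C R^3$, so these errors are dominated by the principal $\alpha^5/R^6$ term above. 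Once all errors have been absorbed into the two positive principal contributions, the displayed inequality follows, and the lemma is obtained by taking square roots and applying $(a+b)^2\le2(a^2+b^2)$. The delicate point throughout is that the positivity of the principal commutator is not automatic: it rests both on the explicit sign extracted from the quadratic weight and on the lower bound for $|\nabla\psi|$ supplied by hypothesis (iii).
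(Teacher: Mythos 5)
Your overall strategy is the same as the paper's: set $f=e^\psi g$, split the conjugated operator into symmetric and antisymmetric parts $S$ and $A$ (your simplified expressions for $S$ and $A$ agree with the paper's \eqref{INF7}--\eqref{INF8}), and bound $\|L_\psi f\|^2$ from below by $2\langle Sf,Af\rangle=\langle[S,A]f,f\rangle$, with the boundary terms killed by (i)--(ii). Your identification of the principal zeroth-order piece is also correct: $[s_0,A_2]=\tfrac{2\alpha}{R^2}\bigl(|\nabla\psi|^4+8\psi_x^2|\nabla\psi|^2\bigr)$ expands exactly to the paper's leading coefficient $\tfrac{2\alpha}{R^2}[9\psi_x^4+10\psi_x^2(\psi_y^2+\psi_z^2)+\psi_y^4+\psi_z^4+2\psi_y^2\psi_z^2]$, and the use of (iii) together with $\alpha\ge\overline CR^{3/2}$ is the right mechanism for the final lower bound.

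However, there is a genuine gap where you say the rest is ``bookkeeping'' of small errors. The quadratic form $\langle[S,A]f,f\rangle$ is \emph{not} the positive principal term plus errors of size $\alpha^3\overline C/R^3$: it also contains sign-indefinite terms of exactly the same order as the principal ones, namely the first-order cross terms $24\psi_x\psi_y\psi_{xx}f_xf_y$, $24\psi_x\psi_z\psi_{xx}f_xf_z$, $8\psi_{xx}\psi_y\psi_zf_yf_z$ (and, after integrating $IN_1$-type terms by parts, mixed terms with $f_{xy},f_{xz},f_{yz}$), as well as negative contributions to the coefficients of $f_x^2,f_y^2,f_z^2$ such as $-6\psi_y^2\psi_{xx}f_x^2$, $-6\psi_x^2\psi_{xx}f_y^2$, $-2\psi_{xx}\psi_z^2f_y^2$ and the terms $-3\psi_{xx}^3$, $-6\psi_{xx}^2(\psi_{yy}+\psi_{zz})$. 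These cannot be absorbed into the $\alpha^5/R^6\,f^2$ term (they involve derivatives of $f$), nor by a naive Cauchy--Schwarz, because hypothesis (iii) gives a lower bound only for $|\tfrac xR+\phi|$ and none for $\psi_y,\psi_z$ on the support; with the actual numerical coefficients the margin is tight. The paper's proof spends essentially all of its effort here: it regroups $2\langle Sf,Af\rangle$ into the pieces $IN_1,\dots,IN_5$ of \eqref{INF29}, converts the dangerous terms by integration by parts, and completes specific squares such as $(\tfrac54\psi_x^2f+\psi_y^2f+\psi_z^2f+2f_{xx}+f_{yy}+f_{zz})^2$, $(\tfrac3{\sqrt2}\psi_x\psi_yf-\sqrt2f_{xy})^2$ and $(\tfrac92\psi_xf_x+2\psi_yf_y+2\psi_zf_z)^2$, with a deliberate splitting of the coefficient $9\psi_x^4$ into $\tfrac{25}{16}+\tfrac{119}{16}$, before the support hypothesis and $\alpha\ge\overline CR^{3/2}$ can be invoked as in \eqref{INF34}--\eqref{INF36}. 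Your proposal does not carry out (or even indicate) this cancellation scheme, and your final passage to $\|(\partial_x-\psi_x)f\|^2$ also skips the identity $\int(\psi_{xx}+\psi_x^2)f^2+f_x^2=\|f_x-\psi_xf\|^2$ (or an equivalent), so as written the proof of the key lower bound for $\|L_\psi f\|^2$ is not complete.
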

\begin{proof}
Let us define $f:=e^{\psi}g$. Since $e^\psi \partial_x e^{-\psi} f=(\partial_x -\psi_x) f$, to establish (\ref{INF1}) it is sufficient to prove that
\begin{equation}
\frac{\alpha^{5/2}}{R^3} \| f\|_{L^2(D)}+\frac{\alpha^{3/2}}{R^2} \| \partial_x f - \psi_x f\|_{L^2(D)}\leq \sqrt 2 \| Tf\|_{L^2(D)}, \label{INF5}
\end{equation}
where $Tf:=e^\psi (\partial_t+\partial_x\Delta)e^{-\psi}f$.

 Using the fact that $\psi_{xxx}=\psi_{yyx}=\psi_{yx}=\psi_{zzx}=\psi_{zx}=0$, it can be seen that
\begin{align}
\notag Tf=&-\psi_t f +f_t +3\partial_x (-\psi_xf_x)+3\psi_x^2f_x+3\psi_x \psi_{xx} f -\psi_x^3f+f_{xxx}\\
\notag &-\psi_y^2 \psi_x f-\psi_{yy}f_x-2\psi_y f_{xy}-\psi_xf_{yy}+\psi_{yy}\psi_x f+\psi_y^2f_x+2\psi_y\psi_xf_y+f_{xyy}\\
\notag&-\psi_z^2 \psi_x f-\psi_{zz}f_x-2\psi_z f_{xz}-\psi_xf_{zz}+\psi_{zz}\psi_x f+\psi_z^2f_x+2\psi_z\psi_xf_z+f_{xzz}\,.
\end{align}
We write the  operator $T$ as the sum of a symmetric operator $S$ and an antisymmetric operator $A$, as follows:
\begin{align}
\notag S&:=( -3 \partial_x(\psi_x \partial_x \cdotp)-\psi_x^3\cdotp-\psi_t \cdotp)\\
\notag&+(-\psi_y^2\psi_x\cdot-\psi_{yy}\partial_x\cdot-2\psi_y\partial_x\partial_y\cdot-\psi_x\partial_y^2\cdot )\\
\notag&+(-\psi_z^2\psi_x\cdot-\psi_{zz}\partial_x\cdot-2\psi_z\partial_x\partial_z\cdot-\psi_x\partial_z^2\cdot )\\
&\equiv I+II_{y}+II_{z}\, , \label{INF7}\\
\notag A&:=(\partial_x^3 \cdotp + 3 \psi_x^2\partial_x \cdotp + 3\psi_x\psi_{xx} \cdotp+\partial_t \cdotp)\\
\notag& +(\psi_{yy}\psi_x\cdot+\psi_y^2\partial_x\cdot +2\psi_y\psi_x\partial_y\cdot+\partial_y^2\partial_x\cdot)\\
\notag&+(\psi_{zz}\psi_x\cdot+\psi_z^2\partial_x\cdot +2\psi_z\psi_x\partial_z\cdot+\partial_z^2\partial_x\cdot)\\
&\equiv III+IV_y+IV_z\, , \label{INF8}
\end{align}
If we denote by $ \langle \cdotp,\cdotp \rangle$ the inner product in the real Hilbert space $L^2(D)\equiv L^2$, then we have that
\begin{align}
 \| Tf\|_{L^2}^2\geq 2\langle Sf,Af \rangle. \label{INF10}
\end{align}
From \eqref{INF7} and \eqref{INF8} we have
\begin{align}
\notag 2\langle Sf,Af \rangle&=2\langle I f,III f \rangle+2\langle I f,IV_y f \rangle+2\langle I f,IV_z f \rangle\\
\notag&+2\langle II_y f,III f \rangle+2\langle II_y f,IV_y f \rangle+2\langle II_y f,IV_zf \rangle\\
&+2\langle II_z f,III f \rangle+2\langle II_z f,IV_y f \rangle+2\langle II_z f,IV_z f \rangle\label{INF11}\,.
\end{align}
Taking into account $(3.7)$ in \cite{BIM2013} and the symmetry between $y$ and $z$ in the ZK equation it is clear that
\begin{align}
\notag &2\langle I f,III f \rangle+2\langle I f,IV_y f \rangle+2\langle II_y f,III f \rangle+2\langle II_y f,IV_y f \rangle
+2\langle I f,IV_z f \rangle+2\langle II_z f,III f \rangle\\
\notag&+2\langle II_z f,IV_z f \rangle=\\
\notag& \int\limits_D (9\psi_x^4\psi_{xx}-3\psi_{xx}^3+6\psi_{xt}\psi_x^2+\psi_{tt})f^2\\
\notag&+\int\limits_D(6\psi_x^2\psi_y^2 \psi_{xx}+2\psi_{xt}\psi_y^2+4\psi_x^2\psi_y^2\psi_{yy}+\psi_y^4\psi_{xx}
+\psi_{xx}\psi_{yy}^2-6\psi_{xx}^2\psi_{yy} )f^2\\
\notag&+\int\limits_D(6\psi_x^2\psi_z^2 \psi_{xx}+2\psi_{xt}\psi_z^2+4\psi_x^2\psi_z^2\psi_{zz}+\psi_z^4\psi_{xx}
+\psi_{xx}\psi_{zz}^2-6\psi_{xx}^2\psi_{zz} )f^2\\
\notag&+\int\limits_D(18\psi_x^2\psi_{xx}-6\psi_{xt}-6\psi_y^2\psi_{xx}+4\psi_{yy}\psi_y^2-6\psi_z^2\psi_{xx}+4\psi_{zz}\psi_z^2)f_x^2\\
\notag &+\int\limits_D (2\psi_y^2\psi_{xx}-6\psi_x^2\psi_{xx}-2\psi_{xt}+4\psi_x^2\psi_{yy}) f_y^2+\int\limits_D (2\psi_z^2\psi_{xx}-6\psi_x^2\psi_{xx}-2\psi_{xt}+4\psi_x^2\psi_{zz}) f_z^2\\
\notag&+\int\limits_D 24 \psi_x\psi_y\psi_{xx}f_x f_y+\int\limits_D 24 \psi_x\psi_z\psi_{xx}f_x f_z\\
&+\int\limits_D (4\psi_{yy}+6\psi_{xx})f_{xy}^2+\int\limits_D (4\psi_{zz}+6\psi_{xx})f_{xz}^2+\int\limits_D 9\psi_{xx} f_{xx}^2+\int\limits_D \psi_{xx}f_{yy}^2+\int\limits_D \psi_{xx}f_{zz}^2\label{INF24}\,.
\end{align}
In order to have the complete expression for $2\langle Sf,Af \rangle$ we still must calculate $2\langle II_y f,IV_z f \rangle+\langle II_z f,IV_y f \rangle$. For that it is enough to calculate the first term and, by symmetry, the expression for the second one will be obtained from the expression for the the first term by replacing $y$ by $z$ and $z$ by $y$.

Applying integration by parts we obtain
\begin{align}
\notag 2\langle II_y& f,IV_z f \rangle= \int\limits_D(\psi_{xx}\psi_y^2\psi_z^2-\psi_{xx}\psi_{yy}\psi_{zz})f^2+\int\limits_D-\psi_{xx}\psi_z^2f_y^2+\int\limits_D-\psi_{xx}\psi_y^2f_z^2\\
&+\int\limits_D4\psi_x\psi_y\psi_{zz}f_xf_y+\int\limits_D-4\psi_x\psi_{yy}\psi_zf_xf_z+\int\limits_D\psi_{xx}f_{yz}^2+\int\limits_D-8\psi_x\psi_y\psi_zf_{xy}f_z\label{INF25}\,.
\end{align}
By symmetry
\begin{align}
\notag 2\langle II_z& f,IV_y f \rangle= \int\limits_D(\psi_{xx}\psi_z^2\psi_y^2-\psi_{xx}\psi_{zz}\psi_{yy})f^2+\int\limits_D-\psi_{xx}\psi_y^2f_z^2+\int\limits_D-\psi_{xx}\psi_z^2f_y^2\\
&+\int\limits_D4\psi_x\psi_z\psi_{yy}f_xf_z+\int\limits_D-4\psi_x\psi_{zz}\psi_yf_xf_y+\int\limits_D\psi_{xx}f_{yz}^2+\int\limits_D-8\psi_x\psi_z\psi_yf_{xz}f_y\label{INF26}\,.
\end{align}
Taking into account that 
\[\int\limits_D-8\psi_x\psi_y\psi_zf_{xy}f_z+\int\limits_D-8\psi_x\psi_z\psi_yf_{xz}f_y=\int\limits_D8\psi_{xx}\psi_y\psi_zf_yf_z\,,\]
from \eqref{INF25} and \eqref{INF26} follows that
\begin{align}
\notag 2\langle II_y f,IV_z f \rangle+2\langle II_z f,IV_y f \rangle&=\int\limits_D(2\psi_{xx}\psi_y^2\psi_z^2-2\psi_{xx}\psi_{yy}\psi_{zz})f^2+\int\limits_D-2\psi_{xx}\psi_z^2f_y^2\\
&+\int\limits_D-2\psi_{xx}\psi_y^2f_z^2+\int\limits_D2\psi_{xx}f_{yz}^2+\int\limits_D8\psi_{xx}\psi_y\psi_zf_yf_z\label{INF27}\,.
\end{align}
From \eqref{INF11}, \eqref{INF24} and \eqref{INF27} follows that
\begin{align}
\notag &2\langle Sf,Af \rangle=\\
\notag& \int\limits_D (9\psi_x^4\psi_{xx}-3\psi_{xx}^3+6\psi_{xt}\psi_x^2+\psi_{tt})f^2\\
\notag&+\int\limits_D(6\psi_x^2\psi_{xx}(\psi_y^2 +\psi_z^2)+2\psi_{xt}(\psi_y^2+\psi_z^2)+4\psi_x^2(\psi_y^2\psi_{yy}+\psi_z^2\psi_{zz})+\psi_{xx}(\psi_y^4+\psi_{yy}^2+\psi_z^4+\psi_{zz}^2))f^2\\
\notag&\int\limits_D(-6\psi_{xx}^2(\psi_{yy} +\psi_{zz})+2\psi_{xx}\psi_y^2\psi_z^2-2\psi_{xx}\psi_{yy}\psi_{zz})f^2\\
\notag&+\int\limits_D(18\psi_x^2\psi_{xx}-6\psi_{xt}-6\psi_y^2\psi_{xx}+4\psi_{yy}\psi_y^2-6\psi_z^2\psi_{xx}+4\psi_{zz}\psi_z^2)f_x^2\\
\notag &+\int\limits_D (2\psi_y^2\psi_{xx}-6\psi_x^2\psi_{xx}-2\psi_{xt}+4\psi_x^2\psi_{yy}-2\psi_{xx}\psi_z^2) f_y^2\\
\notag&+\int\limits_D (2\psi_z^2\psi_{xx}-6\psi_x^2\psi_{xx}-2\psi_{xt}+4\psi_x^2\psi_{zz}-2\psi_{xx}\psi_y^2) f_z^2\\
\notag&+\int\limits_D 24 \psi_x\psi_y\psi_{xx}f_x f_y+\int\limits_D 24 \psi_x\psi_z\psi_{xx}f_x f_z+\int\limits_D8\psi_{xx}\psi_y\psi_zf_yf_z\\
&+\int\limits_D (4\psi_{yy}+6\psi_{xx})f_{xy}^2+\int\limits_D (4\psi_{zz}+6\psi_{xx})f_{xz}^2+\int\limits_D2\psi_{xx}f_{yz}^2 +\int\limits_D 9\psi_{xx} f_{xx}^2+\int\limits_D \psi_{xx}f_{yy}^2+\int\limits_D \psi_{xx}f_{zz}^2\label{INF28}\,.
\end{align}
Let us observe that $\psi_{xx}=\psi_{yy}=\psi_{zz}=\dfrac{2\alpha}{R^2}\geq 0$. In order to bound $2\langle Sf,Af \rangle$ from below we rewrite the expression in \eqref{INF28} in an adequate manner as follows: 
\begin{align}
\notag &2\langle Sf,Af \rangle=\\
\notag& {\;\;\;}\dfrac{2\alpha}{R^2}\int\limits_D [9\psi_x^4+10\psi_x^2(\psi_y^2+\psi_z^2)+\psi_y^4+\psi_z^4+2\psi_y^2\psi_z^2-12\psi_{xx}^2]f^2\\
\notag &+\dfrac{2\alpha}{R^2}\int\limits_D[(18\psi_x^2-2\psi_y^2-2\psi_z^2)f_x^2+(-2\psi_x^2+2\psi_y^2-2\psi_z^2)f_y^2+(-2\psi_x^2-2\psi_y^2+2\psi_z^2)f_z^2]\\
\notag&+\dfrac{2\alpha}{R^2}\int\limits_D(24\psi_x\psi_yf_xf_y+24\psi_x\psi_zf_xf_z+8\psi_y\psi_zf_yf_z)\\
\notag &+\dfrac{2\alpha}{R^2}\int\limits_D(10f_{xy}^2+10f_{xz}^2+2f_{yz}^2+9f_{xx}^2+f_{yy}^2+f_{zz}^2)\\
\notag&+\int\limits_D[-3\psi_{xx}^3+6\psi_{xt}\psi_x^2+\psi_{tt}+2\psi_{xt}(\psi_y^2+\psi_z^2)]f^2-\int\limits_D6\psi_{xt}f_x^2-\int\limits_D2\psi_{xt}f_y^2-\int\limits_D2\psi_{xt}f_z^2\\
\notag&=\dfrac{2\alpha}{R^2}\int\limits_D\{ [\dfrac{25}{16}\psi_x^4+\dfrac52\psi_x^2(\psi_y^2+\psi_z^2)+\psi_y^4+\psi_z^4+2\psi_y^2\psi_z^2]f^2+4f_{xx}^2+f_{yy}^2+f_{zz}^2\}\\
\notag&+\dfrac{2\alpha}{R^2}\int\limits_D[\dfrac92\psi_x^2(\psi_y^2+\psi_z^2)f^2+6\psi_x\psi_yf_xf_y+6\psi_x\psi_zf_xf_z+2f_{xy}^2+2f_{xz}^2]\\
\notag&+\dfrac{2\alpha}{R^2}\int\limits_D(16\psi_x^2f_x^2+2\psi_y^2f_y^2+2\psi_z^2f_z^2+18\psi_x\psi_yf_xf_y+18\psi_x\psi_zf_xf_z+8\psi_y\psi_zf_yf_z)\\
\notag&+\dfrac{2\alpha}{R^2}\int\limits_D\{[(9-\dfrac{25}{16})\psi_x^4+3\psi_x^2(\psi_y^2+\psi_z^2)-12\psi_{xx}^2]f^2+(2\psi_x^2-2\psi_y^2-2\psi_z^2)f_x^2+(-2\psi_x^2-2\psi_z^2)f_y^2\\
\notag&\quad\quad\quad\quad\quad+(-2\psi_x^2-2\psi_y^2)f_z^2+8f_{xy}^2+8f_{xz}^2+2f_{yz}^2+5f_{xx}^2\}\\
\notag&+\int\limits_D\{[-3\psi_{xx}^3+6\psi_{xt}\psi_x^2+\psi_{tt}+2\psi_{xt}(\psi_y^2+\psi_z^2)]f^2-6\psi_{xt}f_x^2-2\psi_{xt}f_y^2-2\psi_{xt}f_z^2\}\\
&\equiv\dfrac{2\alpha}{R^2}\Big[\int\limits_DIN_1+\int\limits_DIN_2+\int\limits_DIN_3+\int\limits_DIN_4\Big]+\int\limits_DIN_5
\label{INF29}\,.
\end{align}
Now we will bound the integrals $\int\limits_DIN_j$, $j=1,2,3$, from below. Taking into account that
\begin{align}
\notag&\{ [\dfrac{25}{16}\psi_x^4+\dfrac52\psi_x^2(\psi_y^2+\psi_z^2)+\psi_y^4+\psi_z^4+2\psi_y^2\psi_z^2]f^2+4f_{xx}^2+f_{yy}^2+f_{zz}^2\}\\
\notag&=(\dfrac54\psi_x^2f+\psi_y^2f+\psi_z^2f+2f_{xx}+f_{yy}+f_{zz})^2-(5\psi_x^2+4\psi_y^2+4\psi_z^2)ff_{xx}\\
\notag&-(\dfrac52\psi_x^2+2\psi_y^2+2\psi_z^2)ff_{yy}-(\dfrac52\psi_x^2+2\psi_y^2+2\psi_z^2)ff_{zz}-4f_{xx}f_{yy}-4f_{xx}f_{zz}-2f_{yy}f_{zz}\,,
\end{align}
and using integration by parts it follows that
\begin{align}
\notag \int\limits_DIN_1&\geq\int\limits_D[-(5\psi_x^2+4\psi_y^2+4\psi_z^2)ff_{xx}-(\dfrac52\psi_x^2+2\psi_y^2+2\psi_z^2)ff_{yy}-(\dfrac52\psi_x^2+2\psi_y^2+2\psi_z^2)ff_{zz}\\
\notag&\quad\quad-4f_{xx}f_{yy}-4f_{xx}f_{zz}-2f_{yy}f_{zz}]\\
\notag&=\int\limits_D[10\psi_x\psi_{xx}ff_x+5\psi_x^2f_x^2+4(\psi_y^2+\psi_z^2)f_x^2+\dfrac52\psi_x^2f_y^2+4\psi_y\psi_{yy}ff_y+2\psi_y^2f_y^2+2\psi_z^2f_y^2\\
\notag&\quad\quad+\dfrac52\psi_x^2f_z^2+2\psi_y^2f_z^2+4\psi_z\psi_{zz}ff_z+2\psi_z^2f_z^2-4f_{xy}^2-4f_{xz}^2-2f_{yz}^2]\\
\notag&=\int\limits_D[-5\psi_{xx}^2f^2+5\psi_x^2f_x^2+4(\psi_y^2+\psi_z^2)f_x^2+\dfrac52\psi_x^2f_y^2-2\psi_{yy}^2f^2+2\psi_y^2f_y^2+2\psi_z^2f_y^2\\
\notag&\quad\quad+\dfrac52\psi_x^2f_z^2+2\psi_y^2f_z^2-2\psi_{zz}^2f^2+2\psi_z^2f_z^2-4f_{xy}^2-4f_{xz}^2-2f_{yz}^2]\\
\notag&=\int\limits_D[(-5\psi_{xx}^2-2\psi_{yy}^2-2\psi_{zz}^2)f^2+(5\psi_x^2+4\psi_y^2+4\psi_z^2)f_x^2+(\dfrac52\psi_x^2+2\psi_y^2+2\psi_z^2)f_y^2\\
&\quad\quad+(\dfrac52\psi_x^2+2\psi_y^2+2\psi_z^2)f_z^2-4f_{xy}^2-4f_{xz}^2-2f_{yz}^2]\label{INF30}\,.
\end{align}
For the integral $\int\limits_DIN_2$ we have, by applying integration by parts and rewriting in an appropriate manner, that
\begin{align}
\notag& \int\limits_DIN_2=\int\limits_D[\dfrac92\psi_x^2(\psi_y^2+\psi_z^2)f^2+2f_{xy}^2+2f_{xz}^2]+\int\limits_D(6\psi_x\psi_yf_xf_y+6\psi_x\psi_zf_xf_z)\\
\notag&=\int\limits_D[\dfrac92\psi_x^2(\psi_y^2+\psi_z^2)f^2+2f_{xy}^2+2f_{xz}^2]+\int\limits_D(3\psi_{xx}\psi_{yy}f^2-6\psi_x\psi_yff_{xy}+3\psi_{xx}\psi_{zz}f^2-6\psi_x\psi_zff_{xz})\\
\notag&=\int\limits_D(\dfrac92\psi_x^2\psi_y^2f^2-6\psi_x\psi_yff_{xy}+2f_{xy}^2)+(\dfrac92\psi_x^2\psi_z^2f^2-6\psi_x\psi_zff_{xz}+2f_{xz}^2)+3\psi_{xx}(\psi_{yy}+\psi_{zz})f^2\\
\notag&=\int\limits_D[(\dfrac3{\sqrt2}\psi_x\psi_yf-\sqrt2f_{xy})^2+(\dfrac3{\sqrt2}\psi_x\psi_zf-\sqrt2f_{xz})^2+3\psi_{xx}(\psi_{yy}+\psi_{zz})f^2]\\
&\geq\int\limits_D3\psi_{xx}(\psi_{yy}+\psi_{zz})f^2\label{INF31}\,.
\end{align}
With respect to the integral $\int\limits_DIN_3$, taking into account that
\begin{align*}
&(16\psi_x^2f_x^2+2\psi_y^2f_y^2+2\psi_z^2f_z^2+18\psi_x\psi_yf_xf_y+18\psi_x\psi_zf_xf_z+8\psi_y\psi_zf_yf_z)=\\
&(\dfrac92\psi_xf_x+2\psi_yf_y+2\psi_zf_z)^2-\dfrac{17}4\psi_x^2f_x^2-2\psi_y^2f_y^2-2\psi_z^2f_z^2\,,
\end{align*}
we have that
\begin{align}
\int\limits_DIN_3\geq\int\limits_D(-\dfrac{17}4\psi_x^2f_x^2-2\psi_y^2f_y^2-2\psi_z^2f_z^2)\label{INF32}\,.
\end{align}
From \eqref{INF10}, \eqref{INF29}, \eqref{INF30}, \eqref{INF31}, and \eqref{INF32}, since $\psi_{xx}=\psi_{yy}=\psi_{zz}=\dfrac{2\alpha}{R^2}$, it follows that
\begin{align}
\notag \|Tf\|_{L^2}^2&\geq 2\langle Sf,Af \rangle\\
\notag&\geq\dfrac{2\alpha}{R^2}\int\limits_D[(-5\psi_{xx}^2-2\psi_{yy}^2-2\psi_{zz}^2)f^2+(5\psi_x^2+4\psi_y^2+4\psi_z^2)f_x^2+\\
\notag&\quad\quad\quad\quad(\dfrac52\psi_x^2+2\psi_y^2+2\psi_z^2)f_y^2+(\dfrac52\psi_x^2+2\psi_y^2+2\psi_z^2)f_z^2-4f_{xy}^2-4f_{xz}^2-2f_{yz}^2 ]\\
\notag&+\dfrac{2\alpha}{R^2}\int\limits_D3\psi_{xx}(\psi_{yy}+\psi_{zz})f^2\\
\notag&+\dfrac{2\alpha}{R^2}\int\limits_D(-\dfrac{17}4\psi_x^2f_x^2-2\psi_y^2f_y^2-2\psi_z^2f_z^2)\\
\notag&+\dfrac{2\alpha}{R^2}\int\limits_D\{[\dfrac{119}{16}\psi_x^4+3\psi_x^2(\psi_y^2+\psi_z^2)-12\psi_{xx}^2]f^2+(2\psi_x^2-2\psi_y^2-2\psi_z^2)f_x^2\\
\notag&\quad\quad\quad\quad+(-2\psi_x^2-2\psi_z^2)f_y^2+(-2\psi_x^2-2\psi_y^2)f_z^2+8f_{xy}^2+8f_{xz}^2+2f_{yz}^2+5f_{xx}^2\}\\
\notag&+\int\limits_D\{[-3\psi_{xx}^3+6\psi_{xt}\psi_x^2+\psi_{tt}+2\psi_{xt}(\psi_y^2+\psi_z^2)]f^2-6\psi_{xt}f_x^2-2\psi_{xt}f_y^2-2\psi_{xt}f_z^2\}\\
\notag&=\dfrac{2\alpha}{R^2}\int\limits_D\{[\dfrac{119}{16}\psi_x^4+3\psi_x^2(\psi_y^2+\psi_z^2)-15\psi_{xx}^2]f^2+(\dfrac{11}4\psi_x^2+2\psi_y^2+2\psi_z^2)f_x^2\\
\notag&\quad\quad\quad+\dfrac12\psi_x^2f_y^2+\dfrac12\psi_x^2f_z^2+4f_{xy}^2+4f_{xz}^2+5f_{xx}^2\}\\
\notag&+\int\limits_D\{[-3\psi_{xx}^3+6\psi_{xt}\psi_x^2+\psi_{tt}+2\psi_{xt}(\psi_y^2+\psi_z^2)]f^2-6\psi_{xt}f_x^2-2\psi_{xt}f_y^2-2\psi_{xt}f_z^2\}\\
\notag&\geq\int\limits_D\{[\dfrac{119}{16}\psi_x^4\psi_{xx}+3\psi_x^2(\psi_y^2+\psi_z^2)\psi_{xx}-18\psi_{xx}^3+6\psi_{xt}\psi_x^2+\psi_{tt}+2\psi_{xt}(\psi_y^2+\psi_z^2)]f^2\\
&\quad\quad+[(\dfrac{11}4\psi_x^2+2\psi_y^2+2\psi_z^2)\psi_{xx}-6\psi_{xt}]f_x^2+(\dfrac12\psi_x^2\psi_{xx}-2\psi_{xt})(f_y^2+f_z^2)\}\label{INF33}\,.
\end{align}
We observe now that
\begin{align*}
&\psi_x=\dfrac{2\alpha}{R}(\dfrac{x}{R}+\phi(t)); \,\,\,\,\,\,\,\, \psi_y=\dfrac{2\alpha y}{R^2}; \,\,\,\,\,\,\,\,\,\psi_z=\dfrac{2\alpha z}{R^2};\,\,\,\,\,\,\,\, \psi_t=2\alpha (\dfrac{x}{R}+\phi(t))\phi'(t); \\
&\psi_{xx}=\psi_{yy}=\psi_{zz}=\dfrac{2\alpha}{R^2};\,\,\,\,\psi_{tt}=2\alpha(\dfrac{x}{R}+\phi(t))\phi''(t)+2\alpha(\phi'(t))^2; \,\,\,\, \psi_{xt}=\dfrac{2\alpha}{R}\phi'(t);
\end{align*}
and define $\overline C:=\max\{ \| \phi'\|_{L^\infty}, \| \phi''\|_{L^\infty},1\}$. If we take $\alpha\geq\overline CR^{3/2}>1$, for the points $(x,y,z,t)$ where $|\dfrac{x}{R}+\phi(t)|\geq 1$, we can see, since $\max\{\dfrac{\alpha^3}{R^6}, \dfrac{\alpha^3\overline C}{R^3}, \alpha\overline C,\dfrac{\alpha^4}{R^6}\}\leq\dfrac{\alpha^5}{R^6}$ and $\dfrac{\alpha\overline C}{R}\leq\dfrac{\alpha^3}{R^4}$, that
\begin{align}
\notag \dfrac{119}{16}\psi_x^4\psi_{xx}&+3\psi_x^2(\psi_y^2+\psi_z^2)\psi_{xx}-18\psi_{xx}^3+6\psi_{xt}\psi_x^2+\psi_{tt}+2\psi_{xt}(\psi_y^2+\psi_z^2)\\
\notag \geq &\dfrac{238\alpha^5}{R^6}(\dfrac xR+\phi(t))^4+\dfrac{24\alpha^3}{R^4}(\psi_y^2+\psi_z^2)-\dfrac{144\alpha^3}{R^6}-\dfrac{48\alpha^3}{R^3}\overline C(\dfrac xR+\phi(t))^2-2\alpha \overline C|\dfrac xR+\phi(t)|\\
\notag &-\dfrac{4\alpha}R \overline C(\psi_y^2+\psi_z^2)-\dfrac{\alpha^3}{R^4}(\psi_{xx}+\psi_x^2)+\dfrac{\alpha^3}{R^4}(\psi_{xx}+\psi_x^2)\\
\notag \geq&\dfrac{238\alpha^5}{R^6}(\dfrac xR+\phi(t))^4+\dfrac{24\alpha^3}{R^4}(\psi_y^2+\psi_z^2)-\dfrac{144\alpha^5}{R^6}(\dfrac xR+\phi(t))^4-\dfrac{48\alpha^5}{R^6}(\dfrac xR+\phi(t))^4\\
\notag&-2\dfrac{\alpha^5}{R^6}(\dfrac xR+\phi(t))^4-\dfrac{4\alpha^3}{R^4} (\psi_y^2+\psi_z^2)-6\dfrac{\alpha^5}{R^6}(\dfrac xR+\phi(t))^4+\dfrac{\alpha^3}{R^4}(\psi_{xx}+\psi_x^2)\\
\geq &\dfrac{38\alpha^5}{R^6}(\dfrac xR+\phi(t))^4+\dfrac{\alpha^3}{R^4}(\psi_{xx}+\psi_x^2)\geq\dfrac{38\alpha^5}{R^6}+\dfrac{\alpha^3}{R^4}(\psi_{xx}+\psi_x^2)\,,
\label{INF34}
\end{align}


\begin{align}
\notag(\dfrac{11}4\psi_x^2&+2\psi_y^2+2\psi_z^2)\psi_{xx}-6\psi_{xt}\geq \dfrac{11}4\psi_x^2\psi_{xx}-6\psi_{xt}\\
&\geq\dfrac{22\alpha^3}{R^4}(\dfrac xR+\phi(t))^2-\dfrac{12\alpha}R\overline C\geq \dfrac{22\alpha^3}{R^4}(\dfrac xR+\phi(t))^2-\dfrac{12\alpha^3}{R^4}\geq \dfrac{10\alpha^3}{R^4}, \label{INF35}
\end{align}
\begin{align}
\notag(\dfrac12\psi_x^2\psi_{xx}-2\psi_{xt})&\geq \dfrac{4\alpha^3}{R^4}(\dfrac xR+\phi(t))^2-\dfrac{4\alpha}{R}\overline C\\
&\geq \dfrac{4\alpha^3}{R^4}(\dfrac xR+\phi(t))^2-\dfrac{4\alpha^3}{R^4}(\dfrac xR+\phi(t))^2=0.\label{INF36}
\end{align}
Therefore, since the functions $f$ and $f_x$ are supported in $\{ (x,y,z,t): |\dfrac{x}{R}+\phi(t)|\geq 1\}$, from \eqref{INF33} to \eqref{INF36}, it follows, for $\alpha\geq\overline CR^{3/2}$, that
\begin{align*}
\| Tf\|_{L^2(D)}^2 \geq &\int\limits_D [38\dfrac{\alpha^5}{R^6}+\dfrac{\alpha^3}{R^4}(\psi_{xx}+\psi_x^2)]f^2+\int\limits_D \dfrac{10\alpha^3}{R^4}f_x^2\\
\geq & \dfrac{\alpha^5}{R^6}\int\limits_D f^2+\dfrac{\alpha^3}{R^4} \int\limits_D [(\psi_{xx}+\psi_x^2)f^2+f_x^2]=\dfrac{\alpha^5}{R^6}\| f\|^2+\dfrac{\alpha^3}{R^4}\| f_x -\psi_x f \|^2,
\end{align*}
which proves (\ref{INF5}).
\end{proof}

\text{\sc{Proof of Theorem \ref{Inferior}}}

The proof of this theorem is analogous to the Theorem 1.3 in \cite{BIM2013} and is based upon Lemma \ref{EstInf1}, and in consequence we omit it.\qed

\section{Proof of Theorem \ref{principal}}\label{CUATRO}

We will follow the same lines of the proof of Theorem 1.1 in \cite{BIM2013}.
Let $v:=u_1-u_2$. As it was shown in \cite{BIM2013} it is sufficient to prove that if $v(0), v(1)\in L^2(e^{a(x^2+y^2+z^2)^{\frac34}}dx\,dy\,dz)$ for some $a$ greater than a certain universal positive constant $a_0$ then $v\equiv 0$ in the set $D_0:=\{(x,y,z,t)\mid (x,y,z)\in\mathbb{R}^3, t\in[\frac13,\frac23]\}$. 

Reasoning by contradiction, suppose that $v$ is not identically zero in $D_0$. Without loss of generality (making a translation in the variables $x$, $y$ and $z$ if necessary), we can affirm that if
\begin{equation*}
Q:=\{(x,y,z,t)\mid \sqrt{x^2+y^2+z^2}\leq 1,\;t\in[\frac13,\frac23]\},\end{equation*}
then $\|v\|_{L^2(Q)}>0$.
By Theorem \ref{Inferior} there exist $\overline C>0$, $C>0$,  and $R_0\geq 2$ such that
\begin{equation}
\|v\|_{L^2(Q)}\leq C e^{14\overline C R^{3/2}}A_R(v)\quad \text{for }R\geq R_0.\label {14}\end{equation}
For $R>3$ we take $N\in\mathbb N$, with $N>4C_2 R$, where $C_2$ is the constant in the statement of Theorem \ref{Carleman}. We construct a  $C^\infty$ truncation function
$\phi_{R,N}:  [0,\infty)\longrightarrow \mathbb R$  supported in $(R, N+1)$,  by taking a nonnegative function $\eta\in C^\infty(\mathbb R)$, with unitary integral and $\text{supp}\, \eta\subset(0,1)$,  and defining $\phi_{R,N}(s):=\int_0^s[\eta(s'-{\scriptstyle{R}})-\eta(s'-{\scriptstyle{N}})]\,ds'$. Notice that $0\leq\phi_{R,N}\leq 1$, $\phi_{R,N}\equiv 1$ in $[R+1,N]$ and ${supp}\, \phi_{R,N}\subset (R,N+1)$.
Let us define $\Phi\equiv\Phi_{R,N}:\mathbb R^3\longrightarrow\mathbb R$ by $\Phi(x,y,z)\equiv\Phi_{R,N}(x,y,z):=\phi_{R,N}(\sqrt{x^2+y^2+z^2})$
and
\begin{equation*}
w(t)(x,y,z):=w_{R,N}(t)(x,y,z):=\Phi_{R,N}(x,y,z)v(t)(x,y,z).
\end{equation*}
The function $w$ satisfies the hypotheses of Theorem \ref{Carleman}. Therefore, for $\beta\geq1$ and $\lambda\geq C_2\beta$
\begin{align}
 \|e^{\lambda|x|}e^{\beta|y|}e^{\beta|z|}w\|_{L^\infty_tL_{xyz}^2(D)}+&\sum_{0<k+l+n\leq 2}\|e^{\lambda|x|}e^{\beta|y|}e^{\beta|z|}\partial_x^k\partial_y^l\partial_z^nw\|_{L^\infty_xL^2_{tyz}(D)}\notag\\
 &
 \leq C_1\lambda^5 \sum_{0\leq k+l+n\leq 3}\|e^{\lambda|x|}e^{\beta|y|}e^{\beta|z|}(| \partial_x^k\partial_y^l \partial_z^nw(0)|+| \partial_x^k\partial_y^l \partial_z^nw(1)|)\|_{L^2(\mathbb R^3)}\notag\\
 &+C_1\|e^{\lambda|x|}e^{\beta|y|}e^{\beta|z|}(\partial_t+\partial_x\Delta)w\|_{L_t^1L^2_{xyz}(D)\cap L^1_xL^2_{tyz}(D)}.
\label{quince}
  \end{align}
  But
  \begin{equation}
  (\partial_t+\partial_x\Delta) w=-u_1\partial_xw-(\partial_xu_2)w+F,\label{dieciseis}
  \end{equation}
  where
  \begin{equation}
  F:=(\partial_x\Phi)u_1v+
(\partial_x\Delta\Phi)v+2\nabla\partial_x\Phi\cdot\nabla v+\Delta\Phi\partial_xv+2\nabla\Phi\cdot\nabla\partial_xv+(\partial_x\Phi)\Delta v.\label{diecisiete}  \end{equation}
 Therefore, from \eqref{quince} and \eqref{dieciseis}, and using the facts that $\|\cdot\|_{L^2(D)}\leq \|\cdot\|_{L^\infty_tL^2_{xyz}(D)}$ and $\|\cdot\|_{L^1_tL^2_{xyz}(D)}\leq C\|\cdot\|_{L^2(D)}$ it follows that
 \begin{align}
 \|e^{\lambda|x|}&e^{\beta|y|}e^{\beta|z|}w\|_{L^2(D)}+\sum_{0<k+l+n\leq 2}\|e^{\lambda|x|}e^{\beta|y|}e^{\beta|z|}\partial_x^k\partial_y^l\partial_z^nw\|_{L^\infty_xL^2_{tyz}(D)}\notag\\
 &\leq CC_1\|e^{\lambda|x|}e^{\beta|y|}e^{\beta|z|} u_1\partial_xw\|_{L^2(D)\cap L^1_xL^2_{tyz}(D)} +CC_1\|e^{\lambda|x|}e^{\beta|y|}e^{\beta|z|}(\partial_xu_2)w\|_{L^2(D)\cap L^1_xL^2_{tyz}(D)}\quad(I)\notag\\
 &+CC_1\|e^{\lambda|x|}e^{\beta|y|}e^{\beta|z|} F\|_{L^2(D)\cap L^1_xL^2_{tyz}(D)}\quad(II)\notag\\
 &+C_1\lambda^5\sum_{0\leq k+l+n\leq 3}\|e^{\lambda|x|}e^{\beta|y|}e^{\beta|z|} (|\partial_x^k\partial_y^l\partial_z^nw(0)|+|\partial_x^k\partial_y^l\partial_z^n(w(1)|)\|_{L^2(\mathbb R^3)}\quad(III)\notag\\
 & =I+II+III.
 \label{dieciocho}
\end{align} 
 We now estimate $I$, $II$, and $III$ separately.
 
 {\textit{Estimation of $I$:}
By observing that  $w$ is supported in the set \begin{equation}D_R:=\{(x,y,z)\mid x^2+y^2+z^2\geq R^2\}\times[0,1]\label{DR},\end{equation} and applying H\"older's inequality we see that
\begin{align}
I&\leq C\|u_1\|_{L^2_xL^\infty_{tyz}(D_R)}\|e^{\lambda|x|+\beta|y|+\beta|z|} \partial_xw\|_{L_x^\infty L_{tyz}^2(D)}+ C\|u_1\|_{L^1_xL^\infty_{tyz}(D_R)}\|e^{\lambda|x|+\beta|y|+\beta|z|}\partial_x w\|_{L^\infty_xL^2_{tyz}(D)}\notag\\
&+C\|\partial_xu_2\|_{L^\infty({D_R})}\|e^{\lambda|x|+\beta|y|+\beta|z|} w\|_{L^2(D)}+ C\|\partial_xu_2\|_{L^2_xL^\infty_{tyz}(D_R)}\|e^{\lambda|x|+\beta|y|+\beta|z|}w\|_{L^2(D)}.\label{dieciochoa}
\end{align} 
We will now see that the norms involving $u_1$ and $\partial_xu_2$ on the right hand side of \eqref{dieciochoa} tend to zero as $R\to\infty$. For that we use the following interpolation Lemma (see \cite{NP2009}).
\begin{lemma}\label{tres} Let $a, b>0$. Assume that $J^af:=(1-\Delta)^{\frac{a}2}f\in L^2(\mathbb{R}^n)$ and $(1+\sum_{i=1}^nx_i^2)^{\frac{b}2}f\in L^2(\mathbb{R}^n)$, where $x=(x_1,\cdots,x_n)\in \mathbb{R}^n$. Then for any $\theta\in(0,1)$
\begin{align}
\|J^{\theta a}((1+\sum_{i=1}^nx_i^2)^{(1-\theta)\frac{b}2}f)\|_{L^2(\mathbb{R}^n)}\leq C\|(1+\sum_{i=1}^nx_i^2)^{\frac{b}2}f\|^{(1-\theta)}_{L^2(\mathbb{R}^n)}\|J^af\|_{L^2(\mathbb{R}^n)}^{\theta}\,.\label{interpo}
\end{align}
\end{lemma}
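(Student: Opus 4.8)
The plan is to prove \eqref{interpo} as a log-convexity (three-lines) statement. Write $\langle x\rangle := (1+\sum_{i=1}^n x_i^2)^{1/2}$, abbreviate $A := \|\langle x\rangle^b f\|_{L^2}$ and $B:=\|J^a f\|_{L^2}$, and note that $\Phi(\theta):=\|J^{\theta a}(\langle x\rangle^{(1-\theta)b} f)\|_{L^2}$ satisfies $\Phi(0)=A$ and $\Phi(1)=B$, so that the assertion is precisely $\Phi(\theta)\le C\,A^{1-\theta}B^\theta$. By density it suffices to treat $f\in\mathcal S(\mathbb R^n)$. Fix $h\in L^2$ with $\|h\|_{L^2}=1$ and, for $z$ in the closed strip $\overline\Sigma:=\{0\le\operatorname{Re}z\le 1\}$, set
\[
F(z):=\Big\langle J^{za}\big(\langle x\rangle^{(1-z)b}f\big),\,h\Big\rangle .
\]
Since $f$ is Schwartz and the multiplier $(1+|\xi|^2)^{za/2}$ and the weight $\langle x\rangle^{(1-z)b}$ depend analytically on $z$, $F$ is analytic in the interior of $\Sigma$, continuous on $\overline\Sigma$, and of at most polynomial growth on vertical lines. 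By Cauchy--Schwarz $|F(\theta)|\le\Phi(\theta)$, with equality after taking the supremum over unit vectors $h$, so it suffices to bound $|F|$ on the two boundary lines.

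The line $\operatorname{Re}z=0$ is clean: writing $z=is$ and $\langle x\rangle^{(1-is)b}=\langle x\rangle^b\langle x\rangle^{-isb}$, both $J^{isa}$ (a Fourier multiplier of modulus one) and multiplication by $\langle x\rangle^{-isb}$ are $L^2$-isometries, whence
\[
|F(is)|\le\|J^{isa}\langle x\rangle^{-isb}\langle x\rangle^{b}f\|_{L^2}=\|\langle x\rangle^{b}f\|_{L^2}=A .
\]
The line $\operatorname{Re}z=1$ is the crux. Here $J^{(1+is)a}(\langle x\rangle^{-isb}f)=J^{isa}J^{a}(\langle x\rangle^{-isb}f)$, and since $J^{isa}$ is an isometry,
\[
|F(1+is)|\le\|J^a(\langle x\rangle^{-isb}f)\|_{L^2}.
\]
The difficulty is that $J^a$ and the oscillatory weight $\langle x\rangle^{-isb}$ do not commute, so this is \emph{not} simply $B$. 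I would control it through the boundedness of multiplication by $\langle x\rangle^{-isb}$ on $H^a(\mathbb R^n)$: because $\langle x\rangle^{-isb}=e^{-isb\log\langle x\rangle}$ is a symbol of order $0$ whose seminorms grow polynomially, $|\partial^\gamma\langle x\rangle^{-isb}|\le C_\gamma(1+|s|)^{|\gamma|}\langle x\rangle^{-|\gamma|}$, the Calder\'on--Vaillancourt theorem (equivalently, a fractional-Leibniz estimate for the conjugated operator $J^a\langle x\rangle^{-isb}J^{-a}$) gives
\[
\|J^a(\langle x\rangle^{-isb}u)\|_{L^2}\le C(1+|s|)^{N}\|J^a u\|_{L^2}
\]
for some $N=N(a,n)$; applied to $u=f$ this yields $|F(1+is)|\le C(1+|s|)^{N}B$.

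Finally I would run the three-lines theorem in the form that tolerates polynomial growth on one boundary line. Either insert the regularizing factor $e^{\delta z^2}$ and let $\delta\downarrow0$ at the end, or invoke the Poisson-kernel (harmonic-measure) version directly: since $\log(1+|s|)$ is integrable against the harmonic measure of $\Sigma$ at the point $\theta$, the factor $(1+|s|)^N$ contributes only a fixed multiplicative constant $C_{N,\theta}$, independent of $A$ and $B$. One thus obtains $|F(\theta)|\le C_{N,\theta}\,A^{1-\theta}(CB)^\theta$, hence $\Phi(\theta)\le C A^{1-\theta}B^\theta$ after taking the supremum over $h$, and the general case follows by approximating $f$ in $H^a(\mathbb R^n)\cap L^2(\langle x\rangle^{2b}\,dx)$ by Schwartz functions. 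The single genuine obstacle is the endpoint $\operatorname{Re}z=1$: quantifying how the modulus-one weight $\langle x\rangle^{-isb}$ acts on $H^a$ with only polynomial loss in $s$, and then absorbing that loss through the refined three-lines estimate.
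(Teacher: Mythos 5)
Your argument is correct, and it is essentially the proof of this lemma found in the literature: the paper does not prove Lemma \ref{tres} but cites Nahas--Ponce \cite{NP2009}, whose proof is exactly this Stein/three-lines argument for the analytic family $z\mapsto J^{za}\big(\langle x\rangle^{(1-z)b}f\big)$, with the line $\operatorname{Re}z=0$ handled by unimodularity and the line $\operatorname{Re}z=1$ by the polynomially growing (in $s$) bound for multiplication by $\langle x\rangle^{-isb}$ on $H^a$, absorbed via the growth-tolerant three-lines theorem. So you have reproduced the intended proof rather than found a different one.
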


Applying \eqref{interpo} with $a=4$, $b=8/5+\epsilon$ and $\theta=3/8+\epsilon/8$ (with $\epsilon$ as in \eqref{hyp}) we have that
\begin{equation}
\|J^{ 3/2+\epsilon/2}((1+x^2+y^2+z^2)^{\frac12(1+\epsilon_1)} f)\|_{L^2(\R^3)}\leq C\|(1+x^2+y^2+z^2)^{\frac12(\frac85+\epsilon)}f\|_{L^2(\R^3)}^{1-\theta}\,\|J^4f\|_{L^2(\R^3)}^{\theta},\label{inter1}\end{equation}
where $\epsilon_1=\frac{17}{40}\epsilon-\frac18\epsilon^2>0$.
Applying \eqref{inter1} with $f=u_j(t)$, $j=1,2$, from \eqref{hyp} and from the embbeding of $H^{3/2+\epsilon/2}(\mathbb R^3)$ in $L^\infty(\mathbb R^3)\cap C(\mathbb R^3)$, we conclude that
 \begin{equation}|u_j(t)(x,y,z)|\leq \frac{C}{(1+x^2+y^2+z^2)^{\frac12(1+\epsilon_1)}}, \quad\text{for all }(x,y,z,t)\in D.\label{dec1}\end{equation}
Since $3/2+\epsilon/2>1$, \eqref{inter1} is also valid for $f=u_2(t)$ with $J^1$ instead of $J^{ 3/2+\epsilon/2}$, and we can apply the product rule for derivatives to obtain that $\|(1+x^2+y^2+z^2)^{\frac12(1+\epsilon_1)}\partial_xu_2(t)\|_{L^2(\mathbb R^3)}$ is a bounded function of $t\in[0,1]$. Thus, applying \eqref{interpo} with $f=\partial_xu_2(t)$, $a=3$, $b=1+\epsilon_1$ and $\theta=1/2+\epsilon_2$ with $\epsilon_2>0$ small, we can conclude that
\begin{align*}
\|J^{ 3/2+3\epsilon_2}((1+x^2+y^2+z^2)^{(1/2-\epsilon_2)\frac12(1+\epsilon_1)} \partial_xu_2(t))\|_{L^2(\R^3)}\\
\leq C\|(1+x^2+y^2+z^2)^{\frac12(1+\epsilon_1)}\partial_xu_2(t)\|_{L^2(\R^3)}^{(1/2-\epsilon_2)}\,\|J^3\partial_xu_2(t)\|_{L^2(\R^3)}^{(1/2+\epsilon_2)}\leq C\,,
\end{align*}
and, in consequence, using the embedding of $H^{3/2+3\epsilon_2}(\mathbb R^3)$ in $L^\infty(\mathbb R^3)\cap C(\mathbb R^3)$, we have
\begin{equation}|\partial_xu_2(t)(x,y,z)|\leq \frac{C}{(1+x^2+y^2)^{\frac12(1+\epsilon_1)(1/2-\epsilon_2)}}, \quad\text{for all }(x,y,z,t)\in D.\label{dec2}\end{equation}
From the decay properties   expressed in \eqref{dec1} and \eqref{dec2} is now easy to see that the four norms in \eqref{dieciochoa} involving $u_1$ and $\partial_xu_2$ tendo to zero as $R\to \infty$, and therefore the exists 
 $R_1>R_0$ such that for $R>R_1$ the term $I$  can be absorbed by the terms $\|e^{\lambda|x|+\beta|y|+\beta|z|} \partial_xw\|_{L^2(D)}$ and $\|e^{\lambda|x|+\beta|y|+\beta|z|} w\|_{L^\infty_xL^2_{tyz}(D)}$ on the left hand side of \eqref{dieciocho}.

{\textit{Estimation of $II$:}

To estimate $II$ we use the expression for $F$ given in \eqref{diecisiete}  and observe that the derivatives of $\Phi$ are supported in the sets
$\{(x,y,z)\mid \sqrt{x^2+y^2+z^2}\in(R,R+1)\}$ and $\{(x,y,z)\mid \sqrt{x^2+y^2+z^2}\in(N,N+1)\}$.
For the estimation in the first set we bound $e^{\lambda|x|+\beta|y|+\beta|z|}$ by $e^{(\lambda+2\beta)(R+1)}$ and apply Cauchy-Schwarz inequality. For the estimation in the second set we observe that since $v(0), \,v(1)\in L^2(e^{a(x^2+y^2+z^2)^{3/4}}\,dx\,dy\,dz)$,
it follows that for all $\overline\lambda>0$ and all $\overline\beta>0$
, $v(0), \,v(1)\in L^2(e^{2\overline\lambda|x|+2\overline\beta|y|+2\overline\beta|z|}\,dx\,dy\,dz)$.
Applying to the equation
\begin{equation}
\partial_tv+\partial_x\Delta v+u_1\partial_xv+(\partial_xu_2)v=0\,,
\end{equation}
a procedure similar to that given in Theorem 1.3  of \cite{BIM2011} for   the ZK equation, we can observe that
$v$ is a bounded function from the time interval $[0,1]$ with values in $H^3(e^{2\overline\lambda|x|+2\overline\beta|y|+2\overline\beta|z|}\,dx\,dy\,dz)$ (i.e. $\partial^{\alpha}v(t)\in L^2(e^{2\overline\lambda|x|+2\overline\beta|y|+2\overline\beta|z|}\,dx\,dy\,dz)$ for all multi-index $\alpha=(\alpha_1,\alpha_2,\alpha_3)$ with $|\alpha|\leq 3$). Thus we can take $\overline\lambda=\lambda+1$ and $\overline{\beta}=\beta+1$ and apply Cauchy-Schwarz inequality. In this manner we obtain
\begin{equation}
II\leq CR^{1/2}e^{(\lambda+2\beta)(R+1)}+C_{\lambda,\beta} N^{1/2}e^{-N} ,\label{dieciochob}\end{equation} 
where $C$ does not depend of $\lambda$, $\beta$, $R$, and $N$, and $C_{\lambda,\beta}$ does not depend of $N$ and $R$.

{\textit{Estimation of $III$:}

Since $\Phi_{R,N}$ and its derivatives are bounded by a constant independent of $R$ and $N$,  it follows that
\begin{align}
\notag III\leq &C\lambda^5\sum_{0\leq k+l+n\leq 3}\|e^{\lambda|x|+\beta|y|+\beta|z|} (|\partial_x^k\partial_y^l\partial_z^nv(0)|+|\partial_x^k\partial_y^l\partial_z^n(v(1)|)\|_{L^2(\{(x,y,z):x^2+y^2+z^2\geq R^2\})}\\
&=:H_R(\lambda,\beta).\label{dieciochoc} \end{align}

We now consider  the region $\Omega_R:=\{(x,y,z,t)\mid\sqrt{x^2+y^2+z^2}\in[6C_2R-1,\, 6C_2R],\,t\in[0,1]\}$, where $C_2$ is the constant in the statement of Theorem \ref{Carleman}. Then, from \eqref{DR}, $\Omega_R\subset D_R$, and since $N>6C_2R$, $v$ and $w$ coincide in $\Omega_R$.    We now return to \eqref{dieciocho}, replace its left hand side by a smaller amount, apply \eqref{dieciochob} and \eqref{dieciochoc}, and make $N\to\infty$ to obtain that
\begin{align}
\notag\|e^{\lambda|x|+\beta|y|+\beta|z|} &v\|_{L^2(\Omega_R)}+ \sum_{0<k+l+n\leq 2}\|e^{\lambda|x|+\beta|y|+\beta|z|}\partial_x^k\partial_y^l\partial_z^nv\|_{L^2(\Omega_R)}\\
&\leq
CRe^{(\lambda+2\beta)(R+1)}+CR^{1/2}H_R(\lambda,\beta)\leq
Ce^{(\lambda+2\beta+1)(R+1)}+CR^{1/2}H_R(\lambda,\beta).\label{veinte}
\end{align}
For $\lambda\geq C_2$, let $\beta:=\lambda/ C_2\geq 1$. Then, for $(x,y,z,t)\in\Omega_R$, 
\begin{equation}
\lambda|x|+\beta|y|+\beta|z|\geq \frac{\lambda}{ C_2}(|x|+|y|+|z|) \geq \frac{\lambda}{ C_2}(6C_2R-1)\geq  \frac{17}3\lambda R.\label{veintea}
\end{equation}
In this way, bearing in mind the definition of $A_{6C_2R}(v)$ given in the statement of Theorem \ref{Inferior} and taking into account that $\lambda\geq C_2$ and $R>3$, from \eqref{veinte} and \eqref{veintea} we conclude that
\begin{equation}
e^{\frac{17}3\lambda R}A_{6C_2R}(v)\leq Ce^ {(\lambda+2\frac{\lambda}{ C_2}+1)(R+1)}+CR^{1/2}H_R(\lambda,\frac{\lambda}{ C_2})
\leq Ce^ {\frac{16}3\lambda R}+R^{1/2}H_R(\lambda,\frac{\lambda}{ C_2}),\label{veinticuatro}
\end{equation}
We now take $\lambda=\frac1{10} aR^{1/2}$ with $R$ large enough. If $x^2+y^2+z^2\geq R^2$ we see that
\begin{align}
\notag \lambda|x|+\beta|y|+\beta|z|&=\lambda(|x|+\frac{|y|}{C_2}+\frac{|z|}{C_2})=\frac1{10}aR^{1/2}(|x|+\frac{|y|}{C_2}+\frac{|z|}{C_2})\\
&\leq \frac1{10}a(x^2+y^2+z^2)^{1/4}(|x|+\frac{|y|}{C_2}+\frac{|z|}{C_2})\notag\\
&\leq\frac1{10}a(x^2+y^2+z^2)^{1/4}2(x^2+\frac{y^2}{C_2^2}+\frac{z^2}{C_2^2})
\leq\frac1{5} a(x^2+y^2+z^2)^{3/4}.\label{veintiuno}\end{align}
Since $v(0), v(1)\in L^2(e^{a(x^2+y^2+z^2)^{3/4}}\,dx\,dy\,dz)\cap C([0,1]; H^4(\R^3))$, by an interpolation argument which can be proved using smooth truncation functions and integration by parts, it can be seen that if $0\leq k+l+n\leq 3$, then $\partial_x^k\partial_y^l\partial_zv(0)$ and $\partial_x^k\partial_y^l\partial_zv(1)$  belong to the class $L^2(e^{\frac14a(x^2+y^2+z^2)^{3/4}}\,dx\,dy\,dz)$  In this way, from \eqref{veintiuno} and the definition of $H_R(\lambda,\beta)$ given in \eqref{dieciochoc} we obtain that
\begin{align*}
&R^{1/2}H_R(\lambda,\frac{\lambda}{ C_2})\\
&\leq CR^{1/2}a^5R^{5/2}\sum_{0\leq k+l+n\leq 3}\|e^{\frac{1}5a(x^2+y^2+z^2)^{3/4}}(|\partial_x^k\partial_y^l\partial_z^nv(0)|+|\partial_x^k\partial_y^l\partial_z^nv(1)|)\,\|_{L^2(\{(x,y,z):x^2+y^2+z^2\geq R^2\})}\\
&\leq Ca^5\sum_{0\leq k+l+n\leq 3}\|R^3e^{\frac15 a(x^2+y^2+z^2)^{3/4}}(|\partial_x^k\partial_y^l\partial_z^nv(0)|+|\partial_x^k\partial_y^l\partial_z^nv(1)|)\,\|_{L^2(\{(x,y,z):x^2+y^2+z^2\geq R^2\})}\\
&\leq Ca^5\sum_{0\leq k+l+n\leq 3}\|e^{\frac14 a(x^2+y^2+z^2)^{3/4}}(|\partial_x^k\partial_y^l\partial_z^nv(0)|+|\partial_x^k\partial_y^l\partial_z^nv(1)|)\,\|_{L^2(\R^3)}\equiv C_a.
\end{align*}
Hence, from \eqref{veinticuatro} with $\lambda =\frac1{10}aR^{1/2}$ we have that
\begin{equation*}
e^{\frac{17}{30}a R^{3/2}}A_{6C_2R}(v)\leq Ce^ {\frac{16}{30}a R^{3/2}}+C_a\leq C_ae^ {\frac{16}{30}aR^{3/2}},
\end{equation*}
and thus
\begin{equation*}
A_{6C_2R}(v)\leq C_ae^ {-\frac1{30}aR^{3/2}},
\end{equation*}
In this way, from \eqref{14} it follows that for $R$ large enough
\[\|v\|_{L^2(Q)}\leq C e^{14\overline C (6C_2R)^{3/2}}A_{6C_2R}(v) \leq C e^{14(6C_2)^{3/2}\overline C R^{3/2}}C_ae^{-\frac1{30}aR^{3/2}}.\]
Thus, if  $a>a_0:=30\times14(6C_2)^{3/2}\overline{C}$, then, by  making $R\to\infty$, we see that $\|v\|_{L^2(Q)}=0$, which contradicts  the original fact that $\|v\|_{L^2(Q)}>0$. Therefore $v\equiv0$, and Theorem \ref{principal} is proved.
\qed

\textbf{Acknowledgments}. Partially supported by Fondo Nacional de Financiamiento Para la Ciencia, la Tecnolog\'ia y la Innovaci\'on Francisco Jos\'e de Caldas, Project ``Ecuaciones Diferenciales Dispersivas y El\'ipticas No Lineales", contrato Colciencias FP44842-087-2015.

\end{document}